\colorlet{mdtRed}{red!50!black}
\definecolor{dblue}{rgb}{0,0,.6}
\numberwithin{equation}{section}
\newtheorem{theorem}[equation]{Theorem}
\newtheorem{corollary}[equation]{Corollary}
\newtheorem{lemma}[equation]{Lemma}
\newtheorem{proposition}[equation]{Proposition}
\newtheorem{definition}[equation]{Definition}
\newtheorem*{theorem*}{Theorem}
\newtheorem*{corollary*}{Corollary}
\theoremstyle{remark}
\newtheorem{remark}[equation]{Remark}
\DeclareMathOperator{\et}{\textnormal{\'et}}
\newcommand{\mf}[1]{\mathfrak{#1}}
\newcommand{\mb}[1]{\mathbb{#1}}
\newcommand{\mc}[1]{\mathcal{#1}}
\begin{document}

\title[On Fundamental Group-Scheme]{Fundamental Group Schemes of some Quot schemes on a Smooth Projective Curve} 

\author[C. Gangopadhyay]{Chandranandan Gangopadhyay} 

\address{Department of Mathematics, Indian Institute of Technology Bombay, Powai, Mumbai 400076, Maharashtra, India.} 

\email{chandra@math.iitb.ac.in} 

\author[R. Sebastian]{Ronnie Sebastian} 

\address{Department of Mathematics, Indian Institute of Technology Bombay, Powai, Mumbai 400076, Maharashtra, India.} 

\email{ronnie@math.iitb.ac.in} 

\subjclass[2010]{14J60, 14F35, 14L15, 14C05}

\keywords{Finite vector bundle, $S$-fundamental group scheme, Hilbert scheme, semistable bundle, Tannakian category.} 

\date{\today}

\begin{abstract}
	Let $k$ be an algebraically closed field. 
	Let $C$ be an irreducible smooth projective curve over $k$. 
	Let $E$ be a locally free sheaf on $C$ of rank $\geq 2$.
	Fix an integer $d \geq 2$. Let $\mc Q$ denote the Quot scheme
	parameterizing torsion quotients of $E$ of degree $d$. 
	In this article we compute the $S$-fundamental group scheme of $\mc Q$. 
\end{abstract}

\maketitle

\section{Introduction}
Let $X$ be a connected, reduced and complete scheme over a perfect field $k$ 
and let $x \in X$ be a $k$-rational point.
In \cite{No1}, Nori introduced a $k$-group scheme $\pi^N(X,x)$ associated to essentially 
finite vector bundles on $X$. In \cite{No2}, Nori extends the definition of $\pi^N(X,x)$ 
to connected and reduced $k$-schemes. 
In \cite{BPS}, Biswas, Parameswaran and Subramanian defined the notion of {\it $S$-fundamental group scheme} 
$\pi^S(X, x)$ for $X$ a smooth projective curve over any algebraically closed field $k$. 
This is generalized to higher dimensional connected smooth projective $k$-schemes 
and studied extensively by Langer in \cite{La, La2}.

Let $C$ be a connected smooth projective curve defined 
over an algebraically closed field $k$. 
Fix a locally free sheaf $E$ on $C$ of rank $\geq 2$ and
an integer $d \geq 2$. Let $\mc Q$ denote the Quot scheme
parameterizing torsion quotients of $E$ of degree $d$. 
It is a smooth and projective variety over $k$.
In this article we shall compute the $S$-fundamental group scheme
of $\mc Q$. We mention some of the earlier results where fundamental
group schemes were computed. In \cite{BH} it is proved that for a smooth projective 
surface $X$, the etale fundamental group $\pi^{\et}(Hilb_X^n,nx)$ of the Hilbert 
scheme of $n$ points ($n\geq 2$), is isomorphic to $\pi^{\et}(X,x)_{\rm ab}$. The main 
result in \cite{PS-surface} is to generalize this to the $S$-fundamental
group scheme. In \cite{La2} it is proved that $\pi^S({\rm Alb}(C),0)$ 
is isomorphic to $\pi^S(C,c)_{\rm ab}$. 
Let $S_d$ ($d\geq 2$) be the permutation group of $d$ symbols
and denote $S^dC := C^d/S_d$.
In \cite{PS-curve} the authors 
prove that the $\pi^S(S^dC,d[c])$ is isomorphic to $\pi^S(C,c)_{\rm ab}$.
Once we have such a result for the $S$-fundamental group scheme, one deduces similar results 
for the Nori and etale fundamental group schemes.

\noindent 
{\bf Notation}. From now on, unless mentioned otherwise, we will be working with 
the following assumptions. Let $k$ be an algebraically closed field. 
Let $C$ be an irreducible smooth projective curve over $k$. 
Let $E$ be a locally free sheaf on $C$ of rank $\geq 2$.
Fix an integer $d \geq 2$. Let $\mc Q$ denote the Quot scheme
parameterizing torsion quotients of $E$ of degree $d$. 
There is a Hilbert-Chow map $\phi:\mc Q\to S^dC$, the definition of which is 
recalled in section \ref{section-Hilbert-Chow}.

The $S$-fundamental group scheme has been defined in Definition \ref{cat-nf}.
The main result we prove in this article is the following.
\begin{theorem*}[Theorem \ref{m-t}]
	For any closed point $q \in \mc Q$, there is an isomorphism of affine $k$-group schemes 
	$${\phi_*^S} : \pi^S(\mc Q, q) \xrightarrow{\sim} \pi^S(S^dC, \phi(q)).$$
\end{theorem*}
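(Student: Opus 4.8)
The proof amounts to showing that the pullback functor $\phi^{*}$ induces an equivalence between the neutral Tannakian category of numerically flat bundles on $S^{d}C$ (with fibre functor at $\phi(q)$) and that on $\mc Q$ (with fibre functor at $q$). Recall from the Tannakian dictionary that a homomorphism of affine group schemes is faithfully flat precisely when the corresponding $\otimes$-functor is fully faithful with image closed under subobjects, and is a closed immersion precisely when every object downstairs is a subquotient of an object in the image; so it suffices to prove that $\phi^{*}$ is fully faithful and essentially surjective on numerically flat bundles. Thus the plan has two parts: (a) full faithfulness, which I expect to be routine, and (b) essential surjectivity, which is the heart of the matter.

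For (a), I would first establish $\phi_{*}\mc O_{\mc Q}=\mc O_{S^{d}C}$. Since $\mc Q$ is smooth, irreducible and projective, $S^{d}C$ is normal (indeed smooth, being a symmetric product of a smooth curve), and $\phi$ is surjective, it is enough to know that $\phi$ has connected fibres; this follows because a general fibre is $\prod_{i=1}^{d}\mb P(E_{c_{i}}^{\vee})\cong(\mb P^{\,r-1})^{d}$, which is connected, together with Stein factorization and the normality of $S^{d}C$. Granting $\phi_{*}\mc O_{\mc Q}=\mc O_{S^{d}C}$, the projection formula gives $\phi_{*}\phi^{*}(V^{\vee}\otimes V')\cong V^{\vee}\otimes V'$ for locally free $V,V'$ on $S^{d}C$, hence $\mathrm{Hom}(\phi^{*}V,\phi^{*}V')=H^{0}(\mc Q,\phi^{*}(V^{\vee}\otimes V'))=H^{0}(S^{d}C,V^{\vee}\otimes V')=\mathrm{Hom}(V,V')$, which is full faithfulness; and since the target category is abelian and $\phi^{*}$ is exact and fully faithful, essential surjectivity will automatically force the image to be closed under subobjects.

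For (b), let $W$ be a numerically flat vector bundle on $\mc Q$. The strategy is to show $W\cong\phi^{*}V$ with $V:=\phi_{*}W$. The key geometric input is the structure of the fibres of $\phi$: the scheme-theoretic fibre over $\sum n_{i}[c_{i}]$ is a product $\prod_{i}\mathrm{Quot}^{\,n_{i}}(\mc O_{c_{i}}^{\oplus r})$ of punctual Quot schemes, and each punctual Quot scheme on a smooth curve is an integral rational projective variety of dimension $n_{i}(r-1)$. Consequently every fibre $F$ of $\phi$ is integral and rational, so, passing to a resolution $\widetilde F\to F$ (which is smooth, projective and rational, hence has trivial $S$-fundamental group scheme) and descending along it (using that these fibres have mild enough singularities), the restriction $W|_{F}$ is trivial, with $H^{0}(F,W|_{F})$ of dimension $\mathrm{rk}(W)$ generating $W|_{F}$. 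Since the fibres of $\phi$ all have dimension $d(r-1)=\dim\mc Q-\dim S^{d}C$ and $\mc Q$ is Cohen--Macaulay while $S^{d}C$ is regular, $\phi$ is flat; cohomology and base change then shows $\phi_{*}W$ is locally free of rank $\mathrm{rk}(W)$ and the adjunction map $\phi^{*}\phi_{*}W\to W$ restricts on each fibre to the evaluation isomorphism $H^{0}(F,W|_{F})\otimes\mc O_{F}\xrightarrow{\sim}W|_{F}$, hence is an isomorphism. Setting $V:=\phi_{*}W$ we obtain $W\cong\phi^{*}V$, and it remains to check that $V$ is numerically flat: by the restriction criterion it suffices to see $V|_{\Gamma}$ is semistable of degree $0$ for every smooth curve $\Gamma\subset S^{d}C$, and this follows by pulling $\Gamma$ back to a curve in $\mc Q$ finite over $\Gamma$, using the numerical flatness of $W$ together with $\phi_{*}\mc O_{\mc Q}=\mc O_{S^{d}C}$.

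The main obstacle is the geometry underlying step (b): one must establish that the punctual Quot schemes $\mathrm{Quot}^{\,n}(\mc O_{c}^{\oplus r})$ on a smooth curve are integral, rational, of the expected dimension $n(r-1)$, and have sufficiently mild singularities — the rationality is what forces numerically flat bundles to be trivial on the fibres, and the dimension count is what makes $\phi$ flat and the base-change argument clean. If flatness of $\phi$ (or the descent along resolutions of fibres) is not available directly, the same conclusion can be reached by first running the argument over the open locus of $S^{d}C$ where $\phi$ is smooth, and then extending the resulting isomorphism across the discriminant $\Delta$ using that $\mc Q$ is smooth (so $W$ and $\phi^{*}V$ are reflexive), after checking that it is an isomorphism in codimension one, i.e. at the generic points of the components of $\phi^{-1}(\Delta)$, where the fibres are again products of copies of $\mb P^{\,r-1}$ with one low-degree punctual Quot scheme and the fibrewise triviality of $W$ still applies. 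Once (a) and (b) are in place, $\phi^{*}$ is an equivalence of neutral Tannakian categories compatible with the fibre functors, and therefore $\phi_{*}^{S}$ is an isomorphism of affine $k$-group schemes.
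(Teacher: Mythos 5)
Your proposal follows essentially the same route as the paper: prove $\phi_*\mathcal{O}_{\mathcal Q}=\mathcal{O}_{S^dC}$ (which gives faithful flatness of $\phi^S_*$, via \cite[Lemma 8.1]{La}), identify the fibers of $\phi$ as products of punctual Quot schemes that are integral of the expected dimension $d(r-1)$ (whence $\phi$ is flat by miracle flatness), show that numerically flat bundles are trivial on each fiber by comparing with a smooth projective rational model, and then use Grauert/base change plus the covering trick to conclude that $W\cong\phi^*\phi_*W$ with $\phi_*W$ numerically flat; the Tannakian criteria then finish the argument exactly as in the paper. You have also correctly located the heart of the matter in the geometry of the punctual fibers.

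There is one genuine gap in the way you propose to carry out that geometric step. You trivialize $W|_F$ by ``passing to a resolution $\widetilde F\to F$.'' The theorem is stated over an arbitrary algebraically closed field, and resolution of singularities is not known in positive characteristic for varieties of dimension $d(r-1)$, which can be arbitrarily large; moreover, even granting that $F$ is rational, being birationally equivalent to $\mathbb P^{d(r-1)}$ does not by itself produce a proper birational \emph{morphism} from a smooth projective variety onto $F$, which is what the descent $W|_F\cong\pi_*\pi^*(W|_F)$ requires (together with normality of $F$, so that $\pi_*\mathcal{O}_{\widetilde F}=\mathcal{O}_F$). The paper sidesteps this by explicitly constructing a smooth projective rational variety $S_d$ as an iterated projective bundle over a point, together with a surjective birational morphism $g_d:S_d\to\mathcal Q_{d[c]}$ (Lemma \ref{fibre-l2} and Proposition \ref{birational}), and by separately proving that the fibers are normal via the $(R_1)$ and $(S_2)$ criteria (Corollary \ref{fiber-R1} and the Cohen--Macaulayness coming from flatness of $\phi$). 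With that explicit model in place of your hypothetical resolution, and with \cite{Mehta-Hogadi} supplying the birational invariance of $\pi^S$, your argument goes through and coincides with the paper's.
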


\noindent
From this the following corollary follows easily. 

\begin{corollary*}[Corollary \ref{m-c}]
For any closed point $q \in \mc Q$, there are isomorphism of affine $k$-group schemes 
${\phi_*^N} : \pi^N(\mc Q, q) \xrightarrow{\sim} \pi^N(S^dC, \phi(q))$ and 
${\phi_*^{\et}} : \pi^{\et}(\mc Q, q) \xrightarrow{\sim} \pi^{\et}(S^dC, \phi(q)).$
\end{corollary*}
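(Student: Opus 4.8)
The plan is to deduce this formally from Theorem~\ref{m-t}, using that the Nori and \'etale fundamental group schemes are functorial quotients of the $S$-fundamental group scheme whose representation categories are cut out inside $\mathrm{Rep}(\pi^S)$ by intrinsic Tannakian conditions. For any of the smooth projective connected $k$-varieties at hand with a base point $x$, a finite vector bundle (one admitting a relation $f(V)\cong g(V)$ for distinct $f,g\in\mathbb N[t]$ with respect to $\oplus$ and $\otimes$) is numerically flat; hence the essentially finite bundles form a full Tannakian subcategory of $\mathrm{Rep}(\pi^S(X,x))$ closed under subobjects, which by Tannaka duality corresponds to a faithfully flat homomorphism $u_X\colon \pi^S(X,x)\twoheadrightarrow \pi^N(X,x)$; likewise the bundles trivialized by a finite \'etale cover give a further faithfully flat $v_X\colon \pi^N(X,x)\twoheadrightarrow \pi^{\et}(X,x)$, this being Nori's identification of $\pi^{\et}$ with the maximal pro-\'etale quotient (valid since $k$ is algebraically closed). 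The maps $u_X,v_X$ are natural in $(X,x)$, so applying this to $\phi\colon\mc Q\to S^dC$ gives a commutative ladder whose rows are $\phi_*^S$, $\phi_*^N$ and $\phi_*^{\et}$.

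For the Nori case: Theorem~\ref{m-t} says $\phi^*$ is an equivalence from the category of numerically flat bundles on $S^dC$ to that on $\mc Q$. Being a tensor functor, $\phi^*$ carries finite bundles to finite bundles (the defining identity pulls back), and being fully faithful it reflects finiteness, so $\phi^*V$ finite forces $V$ finite. An essentially finite bundle is a subquotient of a finite one; a second use of full faithfulness shows that if $W\cong\phi^*V$ on $\mc Q$ is essentially finite then $V$ is a subquotient of a finite bundle on $S^dC$, hence essentially finite. Thus $\phi^*$ restricts to an equivalence between the categories of essentially finite bundles on $S^dC$ and on $\mc Q$, which is exactly the assertion that $\phi_*^N$ is an isomorphism. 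For the \'etale case one applies the functor ``maximal pro-\'etale quotient'' to the isomorphism $\phi_*^N$; equivalently, one repeats the previous argument with ``finite'' replaced by ``trivialized by a finite \'etale cover'', a property preserved by $\phi^*$ and reflected by it because \'etaleness of the finite group scheme through which a $\pi^N$-representation factors is visible through the equivalence $\phi^*$.

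I do not expect a genuine obstacle here: the corollary is formal given Theorem~\ref{m-t}. The only points needing a little care are (i) verifying that $\phi^*$ really does preserve \emph{and} reflect the subcategories of essentially finite, respectively \'etale-trivializable, bundles --- which comes down to the tensor-functoriality of pullback and the naturality of Nori's constructions --- and (ii) recalling that over an algebraically closed field $\pi^{\et}$ is recovered from $\pi^N$ as its maximal pro-\'etale quotient, so that $\phi_*^N$ being an isomorphism propagates to $\phi_*^{\et}$.
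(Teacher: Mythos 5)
Your argument is correct and is essentially the paper's own (very short) proof: the paper deduces the corollary from Theorem~\ref{m-t} by recalling that $\pi^N$ is the inverse limit of the finite quotients of $\pi^S$ and $\pi^{\et}$ the inverse limit of the finite reduced (equivalently, \'etale) quotients, citing \S 5.5 of \cite{PS-surface} for details. Your Tannakian phrasing --- that the equivalence $\phi^*$ on numerically flat bundles preserves and reflects the subcategories of essentially finite and of \'etale-trivializable bundles --- is just the dual formulation of the same formal deduction, spelled out in more detail.
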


\noindent
In view of \cite{PS-curve} it follows that 
\begin{corollary}$\pi^{?}(\mc Q,q)\cong \pi^{?}(C,c)_{\rm ab}$
for $?=S,N,\et$.
\end{corollary}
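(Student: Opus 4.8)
The plan is to obtain this corollary simply by concatenating the isomorphisms already in hand. From Theorem \ref{m-t} and Corollary \ref{m-c} one has, for $? \in \{S, N, \et\}$, an isomorphism $\phi_*^? : \pi^?(\mc Q, q) \xrightarrow{\sim} \pi^?(S^dC, \phi(q))$ of affine $k$-group schemes. It therefore suffices to identify $\pi^?(S^dC, \phi(q))$ with $\pi^?(C, c)_{\rm ab}$.

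First I would reduce to a convenient base point: since $C$ is irreducible, $S^dC$ is connected, and for each of the three flavours $? = S, N, \et$ the fundamental group scheme $\pi^?(S^dC, -)$ is independent, up to isomorphism, of the choice of $k$-rational base point; hence $\pi^?(S^dC, \phi(q)) \cong \pi^?(S^dC, d[c])$. Then I would invoke \cite{PS-curve}, where it is shown that $\pi^S(S^dC, d[c]) \cong \pi^S(C, c)_{\rm ab}$; as recalled in the introduction, the Nori and \'etale analogues $\pi^N(S^dC, d[c]) \cong \pi^N(C, c)_{\rm ab}$ and $\pi^{\et}(S^dC, d[c]) \cong \pi^{\et}(C, c)_{\rm ab}$ follow from the $S$-version by passing to the appropriate (pro-finite, resp. pro-\'etale) quotients of the relevant Tannakian categories. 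Composing, for $? \in \{S, N, \et\}$ one gets
$$\pi^?(\mc Q, q) \xrightarrow[\ \sim\ ]{\phi_*^?} \pi^?(S^dC, \phi(q)) \cong \pi^?(S^dC, d[c]) \xrightarrow{\sim} \pi^?(C, c)_{\rm ab},$$
which is the assertion.

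There is essentially no obstacle in this argument: all of the substance sits in Theorem \ref{m-t}, proved earlier in the paper, and in the results of \cite{PS-curve}. The only bookkeeping is the base-point independence used above, which is standard for connected schemes, together with the (equally standard) observation that $\pi^?(C, c)_{\rm ab}$ is itself independent of $c$ up to isomorphism, so that the displayed chain is meaningful as a statement about abstract affine $k$-group schemes.
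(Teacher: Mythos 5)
Your proposal is correct and matches the paper's (essentially unstated) argument: the corollary is obtained by composing the isomorphism of Theorem \ref{m-t} and Corollary \ref{m-c} with the identification $\pi^{?}(S^dC, d[c]) \cong \pi^{?}(C,c)_{\rm ab}$ from \cite{PS-curve}. Your extra remark on base-point independence is a standard point the paper leaves implicit; nothing further is needed.
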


The key ingredient in the proof of the above theorem is the following corollary 
regarding the scheme theoretic fiber of the Hilbert-Chow morphism. Related to this,
in \cite[Proposition 5.9]{Kleiman-et-al}, the authors give a constructive 
proof that each fiber of the
Hilbert-Chow map has the same reduction as the product of certain Quot
schemes, which, a priori, need not be reduced. We prove the following. Let $D$ 
be a divisor corresponding to a closed point of $S^dC$. Let $\mc Q_D$ denote the 
scheme theoretic fiber over the point $D$. Then we have the following result. 
\begin{corollary*}[Corollary \ref{cor-fiber}]
The fiber $\mc Q_D$ is reduced, irreducible and normal.
\end{corollary*}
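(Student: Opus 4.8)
The plan is to verify Serre's criterion $R_1+S_2$ for the scheme-theoretic fibre $\mc Q_D$: this yields normality, hence reducedness, and irreducibility will fall out of the dimension bookkeeping along the way. Write $r=\operatorname{rank}E\ (\geq 2)$ and $D=\sum_{i=1}^{\ell}n_ix_i$ with the $x_i$ distinct and $\sum_i n_i=d$. All of the local analysis happens separately over each $x_i$, and the main tool is the stratification of $\mc Q_D$ by the isomorphism type of the universal quotient: over a closed point $[E\twoheadrightarrow T]\in\mc Q_D$ one has $T\cong\bigoplus_i T_i$ with $T_i$ supported at $x_i$ of length $n_i$, and $T_i\cong\bigoplus_j\widehat{\mc O}_{C,x_i}/(t^{\lambda^{(i)}_j})$ for a partition $\lambda^{(i)}\vdash n_i$ having at most $r$ parts (the number of parts being the minimal number of generators of $T_i$, which is at most $\operatorname{rank}E$). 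Let $\Sigma_{\underline\lambda}\subseteq\mc Q_D$ be the locally closed locus where the quotient has the fixed type $\underline\lambda=(\lambda^{(i)})_i$, and let $\Sigma_0$ be the \emph{generic} stratum, where every $\lambda^{(i)}=(n_i)$, i.e.\ $T\cong\mc O_D$.

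First I would compute $\dim\Sigma_{\underline\lambda}$. The stratum $\Sigma_{\underline\lambda}$ is the quotient of the surjectivity locus $\operatorname{Surj}(E,T_{\underline\lambda})$ — a dense open of the affine space $\operatorname{Hom}(E,T_{\underline\lambda})\cong\A^{rd}$ — by the free post-composition action of $\operatorname{Aut}(T_{\underline\lambda})$, which is itself a dense open of the affine space $\operatorname{End}(T_{\underline\lambda})$. Setting $n(\mu)=\sum_j(j-1)\mu_j$ for a partition $\mu$ in decreasing order, the identity $\dim\operatorname{End}\big(\bigoplus_jk[[t]]/(t^{\mu_j})\big)=\sum_j(2j-1)\mu_j=2n(\mu)+|\mu|$ gives
\[
\dim\Sigma_{\underline\lambda}\;=\;rd-\sum_i\big(2n(\lambda^{(i)})+n_i\big)\;=\;(r-1)d-2\sum_i n(\lambda^{(i)}).
\]
Since $n(\mu)\geq 0$ with equality exactly when $\mu$ has a single part, $\Sigma_0$ is the unique stratum of maximal dimension $(r-1)d$, and every other stratum has codimension $\geq 2$. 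In particular $\dim\mc Q_D=(r-1)d=\dim\mc Q-\dim S^dC$.

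Now $\mc Q$ is smooth, hence Cohen--Macaulay, and $S^dC$ is smooth; since every component of $\mc Q_D$ has dimension $\geq\dim\mc Q-\dim S^dC=(r-1)d$ (automatic) and $\dim\mc Q_D\leq(r-1)d$ (the stratum estimate), all fibres of $\phi$ have the expected dimension, so by miracle flatness $\phi$ is flat. Consequently $\mc Q_D$ is Cohen--Macaulay: in particular it satisfies $S_2$ and is equidimensional of dimension $(r-1)d$. Equidimensionality forces the generic point of every irreducible component of $\mc Q_D$ to lie in a stratum of dimension $(r-1)d$, hence in $\Sigma_0$; as $\Sigma_0$ is irreducible (an open subscheme of $\A^{rd}$ modulo the connected group $\mc O_D^{\times}$) and $\dim\overline{\Sigma_0}=(r-1)d=\dim\mc Q_D$, we get $\mc Q_D=\overline{\Sigma_0}$, which is irreducible. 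Moreover $\Sigma_0$ is smooth and $\phi$ is smooth along it — equivalently $d\phi$ is surjective at a quotient $E\twoheadrightarrow\mc O_D$, which one reads off the Fitting-ideal description of $\phi$ — so $\mc Q_D$ is regular at its generic point; then $R_0+S_1$ gives that $\mc Q_D$ is reduced. Finally, $\mc Q_D$ being reduced and $\Sigma_0$ a dense open smooth variety, the singular locus of $\mc Q_D$ is contained in $\mc Q_D\setminus\Sigma_0$, of codimension $\geq 2$; hence $R_1$ holds, and $R_1+S_2$ yields normality.

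The main obstacle is making the stratum picture honest: one must show that each $\Sigma_{\underline\lambda}$ is genuinely a locally closed subscheme of dimension $\dim\operatorname{Surj}(E,T_{\underline\lambda})-\dim\operatorname{Aut}(T_{\underline\lambda})$ (so the $\operatorname{Aut}$-action must be free and the orbit map well behaved), that surjections $E\twoheadrightarrow T_{\underline\lambda}$ actually exist — which is precisely where $\operatorname{rank}E\geq 2$ and the ``at most $r$ parts'' constraint enter — and, most importantly for normality, that every non-generic stratum has codimension $\geq 2$. A secondary delicate point, feeding the $R_0$ step, is the smoothness of $\phi$ along $\Sigma_0$, i.e.\ the surjectivity of $d\phi$ at a generic quotient, which requires making explicit how deforming a torsion quotient moves its associated divisor. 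As a variant one may first upgrade \cite[Proposition~5.9]{Kleiman-et-al} to a scheme-theoretic isomorphism $\mc Q_D\cong\prod_i\mc Q_{x_i,n_i}$ with the punctual Quot schemes and then argue one factor at a time, since reducedness, irreducibility, normality and Cohen--Macaulayness are all stable under products of finite-type $k$-schemes.
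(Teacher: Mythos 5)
Your argument is correct and reaches the same commutative-algebra endgame as the paper --- miracle flatness from equidimensionality of fibers, hence $\phi$ flat and $\mc Q_D$ Cohen--Macaulay, hence $(S_2)$; smoothness of $\phi$ on a dense open whose complement has codimension $\geq 2$, hence $(R_0)$ and $(R_1)$; then Serre's criterion --- but you get the dimension and codimension inputs by a genuinely different route. The paper never stratifies by quotient type: for $D=d[c]$ it builds an iterated projective bundle $S_d$ (of successive Hecke modifications at $c$) with a surjective map $g_d:S_d\to\mc Q_{d[c]}$ that is bijective over the cyclic locus $V$ and has positive-dimensional fibers elsewhere, which yields irreducibility, $\dim\mc Q_{d[c]}=d(r-1)$, and $\operatorname{codim}(\mc Q_{d[c]}\setminus V)\geq 2$ all at once; the general $D$ is then handled by proving a scheme-theoretic isomorphism $\mc Q_D\cong\prod_i\mc Q_{d_i[c_i]}$ (your suggested ``variant'' is exactly the paper's route, and it upgrades \cite[Proposition 5.9]{Kleiman-et-al} for free). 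Your $\operatorname{Surj}(E,T_{\underline\lambda})/\operatorname{Aut}(T_{\underline\lambda})$ count is more direct and gives finer information (the exact codimension $2\sum_i n(\lambda^{(i)})$ of every stratum), at the cost of having to justify the orbit-space dimension formula; the paper's construction is longer but its resolution $S_d$ is reused later in an essential way (it is the smooth rational model needed in Proposition \ref{prop-num-flat-trivial} to trivialize numerically flat bundles on the fibers), which a stratification cannot provide. The one step you assert rather than prove is the surjectivity of $d\phi$ along $\Sigma_0$: this does not quite ``read off'' a Fitting-ideal description --- the paper spends Lemmas \ref{GS-L2} and \ref{GS-L3} on it, factoring a cyclic quotient $E\to\mc O_D$ through a line bundle quotient $E\to L$ and then explicitly lifting a first-order deformation of $D$ to a deformation of the quotient via $E\to L\to L\otimes\mc O_{\mc D}$. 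You should supply that argument (or an equivalent tangent-space computation) to close the $(R_0)$/$(R_1)$ step.
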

\noindent
Further, there is a smooth projective rational variety $S_d$ and a birational 
map $g_d:S_d\to \mc Q_D$, see Proposition \ref{birational}. 
This allows us to conclude easily, using Grauert's theorem, 
that every numerically flat bundle is the pullback of a numerically flat along $\phi$.

In the rest of this article, $E$ will be a locally free sheaf of 
rank $\geq 2$ and $d$ will be an integer $\geq 2$.

\subsection*{Acknowledgements} We thank Arjun Paul for useful discussions.
We thank the authors in \cite{Kleiman-et-al} for their interest. We thank
the referee for a very careful reading of this article and for 
helpful comments.

\section{Hilbert-Chow morphism}\label{section-Hilbert-Chow}
In this section we recall the definition of the Hilbert Chow morphism 
$\phi:\mc Q\to S^dC$. For Hilbert schemes of points, one has a 
Hilbert-Chow morphism, see \cite[Chapter 7, section 1]{FGAex}
for a detailed discussion. Here we describe how to get such 
a morphism for the Quot schemes we consider. This construction appears in 
other places, for example, see the introduction in 
\cite{Bis-Dh-Hu}. The map $\phi$ that we define here is the same 
as the map $\xi$ defined in \cite{Kleiman-et-al}, in the situation
that they work in. There, however, the authors give a more explicit
description.

Let $p_1:C\times \mc Q\to C$
and $p_2:C\times \mc Q\to \mc Q$ denote the projections
and let 
\begin{equation}\label{hc-e1}
0\to K\to p_1^*E\to B\to 0
\end{equation}
denote the universal quotient on $C\times \mc Q$.
Since $B$ and $p_1^*E$ are flat over $\mc Q$, it follows 
that $K$ is a flat $\mc Q$ sheaf. Let $q\in \mc Q$ denote 
a closed point. Restricting this quotient to 
$C\times q$ we get the exact sequence
\begin{equation}\label{hc-e2}
0\to K\vert_{C\times q}\to E\to B\vert_{C\times q}\to 0\,. 
\end{equation}
It follows that $K\vert_{C\times q}$ is a locally free 
sheaf on $C$. From Nakayama's lemma it follows that 
$K$ is a locally free $C\times \mc Q$ sheaf of rank $r:={\rm rank}\, E$.
Taking determinant of the inclusion in \eqref{hc-e1} we get an exact sequence
$$0\to {\rm det}(K)\to {\rm det}(p_1^*E)\to \mc F\to 0\,.$$
To show that $\mc F$ is flat over $\mc Q$ it suffice 
to show that the restriction of this sequence to 
$C\times q$ remains exact on the left. But this is clear
as the restriction of this sequence to $C\times q$ 
is precisely the sequence obtained by taking determinant
of the inclusion in \eqref{hc-e2}, which remains exact on the left.
Thus, on $C\times \mc Q$ we get a quotient 
$$0\to {\rm det}(K)\otimes{\rm det}(p_1^*E)^{-1}\to \mc O\to \mc F\otimes {\rm det}(p_1^*E)^{-1}\to 0\,.$$
This defines a morphism
\begin{equation}\label{defn-hc}
\phi:\mc Q\to S^dC\,.
\end{equation}
In the following sections we will study the fibers of this morphism.

\section{Locus where $\phi$ is smooth}
Consider the map $\phi:\mathcal{Q}\rightarrow S^{d}C$.
Let $D$ denote the divisor $\sum^{k}_{i=1}d_{i}[c_{i}]$
and consider a quotient $q$ 
\begin{equation}\label{thick-points}
E\xrightarrow{q} \mathcal{O}_{D}\rightarrow 0\,. 
\end{equation}

\begin{lemma}\label{GS-L2}
Given a quotient $q$ as above, there is a line bundle $L$ 
and a surjection $E\rightarrow L\rightarrow 0$
such that $q$ factors as 
$$E\rightarrow L\rightarrow \mc O_D \,.$$
\end{lemma}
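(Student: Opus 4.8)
The plan is to realize $L$ as a line bundle on $C$ of large degree and to produce the factorization by lifting $q$ along a restriction map and then correcting the lift until it becomes surjective as a morphism of sheaves; the hypothesis $\operatorname{rank}E\ge 2$ will enter exactly at this last correction step.

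Fix a line bundle $L$ on $C$ with $\deg L$ large. The closed subscheme $D\subset C$ is a finite disjoint union of spectra of Artinian local rings, so $L|_D:=L\otimes\mathcal O_D$ is a trivial line bundle on $D$; fixing an isomorphism $j\colon L|_D\xrightarrow{\sim}\mathcal O_D$ we may view $q$ as a surjection $q'\colon E\to L|_D$. From
\[
0\to E^\vee\otimes L(-D)\to E^\vee\otimes L\to E^\vee\otimes L|_D\to 0
\]
and the vanishing $H^1(C,E^\vee\otimes L(-D))=0$, which holds once $\deg L$ is large, the restriction map $\operatorname{Hom}(E,L)=H^0(E^\vee\otimes L)\to H^0(E^\vee\otimes L|_D)=\operatorname{Hom}(E,L|_D)$ is surjective, so $q'$ lifts to some $\tilde q_0\colon E\to L$. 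Then $q=j\circ(L\to L|_D)\circ\tilde q_0$ already factors through $\tilde q_0$; the issue is that $\tilde q_0$ need not be surjective.

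The key local observation is that every lift of $q'$ is automatically surjective at the points of $D$. Indeed, over $\mathcal O_{C,c_i}$, after trivializing $L$, the map $\tilde q_0$ becomes a map $\mathcal O^{\,r}\to\mathcal O$ whose reduction modulo the ideal $(t_i^{d_i})$ of $D$ is the surjection $q'$, so the image of $\tilde q_0$ together with $(t_i^{d_i})\subseteq\mathfrak m_{c_i}$ is all of $\mathcal O_{C,c_i}$, and Nakayama forces surjectivity at $c_i$. Now the set of lifts of $q'$ is a torsor under $\operatorname{Hom}(E,L(-D))=H^0(E^\vee\otimes L(-D))$, and replacing $\tilde q_0$ by $\tilde q_0+s$ with $s$ in this group changes neither the composite with $L\to L|_D$ (as $L(-D)=\ker(L\to L|_D)$) nor surjectivity along $D$; so it remains to choose $s$ with $\tilde q:=\tilde q_0+s$ surjective at every point of $C\smallsetminus D$ as well. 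For $\deg L$ large, $E^\vee\otimes L(-D)$ is globally generated on $C$, so for each $x\in C\smallsetminus D$ the evaluation $H^0(E^\vee\otimes L(-D))\to (E^\vee\otimes L)\otimes k(x)$ is onto and the locus of $s$ for which $\tilde q_0+s$ fails to be surjective at $x$ is an affine-linear subspace of codimension $\operatorname{rank}E=r$; hence the variety of ``bad'' pairs $(s,x)$ has dimension $h^0(E^\vee\otimes L(-D))-r+\dim C<h^0(E^\vee\otimes L(-D))$ because $r\ge 2$. A general $s$ therefore yields a surjection $\tilde q\colon E\twoheadrightarrow L$, and together with the composite $L\to L|_D\xrightarrow{j}\mathcal O_D$ this is the desired factorization of $q$.

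The one genuinely non-formal point is this final Bertini-type dimension count, and it is the sole place where $\operatorname{rank}E\ge2$ is used; the rest — the two cohomology vanishings obtained by enlarging $\deg L$, the triviality of $L|_D$, and the Nakayama step along $D$ — is routine. (Equivalently, one can phrase everything as the search for a corank-one subbundle of $E$ contained in $\ker q$; the obstruction to such a subbundle being saturated in $E$ sits, at each $c_i$, in the one-dimensional space $\operatorname{Tor}_1^{\mathcal O_C}(\mathcal O_D,k(c_i))$, which is again why rank $\ge 2$ is exactly what is needed.)
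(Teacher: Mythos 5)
Your proof is correct, and the first half (choosing $L$ of large degree, killing $H^1(E^\vee\otimes L(-D))$, and lifting $q$ through the restriction map) coincides exactly with the paper's argument. Where you diverge is in how you handle the fact that the lift $\tilde q_0\colon E\to L$ need not be surjective. The paper's solution is a one-line trick: it simply replaces $L$ by the image of $E$ in $L'$; on a smooth curve a nonzero subsheaf of a line bundle is again a line bundle, $E$ surjects onto its image by definition, and the induced map from the image to $\mathcal O_D$ is surjective because the composite is $q$. This requires no hypothesis on $\operatorname{rank}E$ and no further cohomology. You instead keep $L$ fixed and perturb the lift by a general section of $E^\vee\otimes L(-D)$, using global generation and an incidence-variety dimension count to find a lift that is surjective everywhere; this is a valid Bertini-type argument (your Nakayama observation that surjectivity along $D$ is automatic, and that perturbing by $\operatorname{Hom}(E,L(-D))$ preserves the factorization, are both correct), and it buys something slightly stronger -- a surjection onto a line bundle of any prescribed sufficiently large degree -- at the cost of more work and of genuinely needing $r\ge 2$. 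One small caveat: your closing claim that $\operatorname{rank}E\ge 2$ is ``exactly what is needed'' is a statement about your argument rather than about the lemma, which (as the paper's image trick shows) holds for any rank, trivially so for rank one.
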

\begin{proof}
Let $L'$ be any line bundle on $C$.
Then we have the exact sequence
\[
0 \to L'(-D) \to L' \to L'|_{D} \to 0\,.
\]
Applying the functor $\text{Hom}(E, )$ to the above exact sequence, we get
\[
0 \to \text{Hom}(E,L'(-D)) \to \text{Hom}(E,L') \to \text{Hom}(E,L'|_{D}) \to \text{Ext}^{1}(E,L'(-D))
\]
Now $\text{Ext}^{1}(E,L'(-D))\cong H^{1}(E^{\vee}\bigotimes L'(-D))$, 
so for $L'$ of sufficiently high degree we get $\text{Ext}^{1}(E,L'(-D))=0$,
that is, we have an exact sequence when deg $L'\gg0$
\[
0 \to \text{Hom}(E,L'(-D)) \to \text{Hom}(E,L') \to \text{Hom}(E,L'|_{D}) \to 0\,.
\]
In other words, for any homomorphism $E\rightarrow L'|_{D}$, 
we have a morphism $E\rightarrow L'$ 
such that the following diagram commutes:
\[
\begin{tikzcd}
E \arrow[r] \arrow[d] & L'|_{D}\cong \mc O_D \\
L '\arrow[ur] &
\end{tikzcd}
\]
In particular, taking the quotient $q:E\to \mc O_D$, 
there is a line bundle $L'$ such that $q$ factors 
as $E\to L'\to \mc O_D$. Let $L$ be the image of $E$ in $L'$. 
Then, we have a surjection 
$$E\rightarrow L\rightarrow \mc O_{D}\,,$$
which proves the lemma.
\end{proof}
\begin{lemma}\label{GS-L3}
The map $\phi:\mathcal{Q}\rightarrow S^{d}C$ 
is smooth at $q$ (corresponding to the quotient in equation 
\eqref{thick-points}).
\end{lemma}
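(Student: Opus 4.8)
The plan is to compute the differential of $\phi$ at $q$ and show it is surjective, which combined with smoothness of $\mc Q$ at $q$ (recall $\mc Q$ is smooth) and upper-semicontinuity of fiber dimension will give smoothness of $\phi$ at $q$. First I would recall the standard identifications of tangent spaces: for the Quot scheme, $T_q\mc Q = \operatorname{Hom}(K|_{C\times q}, B|_{C\times q}) = \operatorname{Hom}(K', \mc O_D)$ where $K'$ denotes the kernel in \eqref{thick-points}; and for the symmetric product, $T_{\phi(q)}S^dC = H^0(D, \mc O_D(D)) = H^0(D, N_{D/C})$, the space of first-order deformations of the divisor $D$. The map on tangent spaces induced by $\phi$ is the one coming from taking determinants: a deformation of the quotient $E \to \mc O_D$ induces a deformation of $\det(K') \hookrightarrow \det E$, hence of the effective divisor it cuts out.

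The key step is to use Lemma \ref{GS-L2}. Writing $q$ as the composite $E \twoheadrightarrow L \twoheadrightarrow \mc O_D$ with $L$ a line bundle, I would factor the tangent map of $\phi$ through the corresponding tangent map for the quotient $L \to \mc O_D$. Concretely, the surjection $E \to L$ gives a surjection $\operatorname{Hom}(L', \mc O_D) \to \operatorname{Hom}(K', \mc O_D)$ is not quite what I want; rather, I would argue that the deformation-theoretic square relating deformations of $E \to \mc O_D$, deformations of $L \to \mc O_D$, and deformations of the divisor $D$ commutes, and that the tangent map for $L \twoheadrightarrow \mc O_D$ followed by the determinant map to $T_D S^dC$ is already surjective. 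For a line bundle quotient $L \to \mc O_D = L|_D$, the relevant Quot scheme tangent space is $\operatorname{Hom}(L(-D), \mc O_D) = H^0(D, \mc O_D(D))$, and the determinant map to $T_D S^dC = H^0(D,\mc O_D(D))$ is literally the identity. Hence the composite $T_q\mc Q \to T_D S^dC$ is surjective provided $T_q\mc Q \to \operatorname{Hom}(L(-D),\mc O_D)$ is surjective, which follows because the surjection $E \to L$ induces, on applying $\operatorname{Hom}(-,\mc O_D)$ to $0 \to K' \to p_1^*E \to \mc O_D \to 0$ versus $0 \to L(-D) \to L \to \mc O_D \to 0$ and using that $E \to L$ is surjective with kernel a subsheaf of $K'$, a surjection at the level of the relevant $\operatorname{Hom}$ groups.

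Once the differential is surjective at $q$, I would conclude as follows: $\dim T_q \mc Q \geq \dim T_{\phi(q)} S^dC + \dim \ker(d\phi_q)$, and since $\mc Q$ and $S^dC$ are smooth with $\dim \mc Q = \dim S^dC + (\dim \mc Q - d)$, a dimension count (using that the generic fiber of $\phi$ has dimension $\dim\mc Q - d$ and fiber dimension is upper-semicontinuous) shows the fiber through $q$ has the expected dimension, so $\phi$ is flat at $q$; flat with smooth source and target and smooth fibers, hence smooth at $q$. Alternatively, and more cleanly, surjectivity of $d\phi_q : T_q\mc Q \to T_{\phi(q)}S^dC$ together with smoothness of both $\mc Q$ and $S^dC$ over $k$ directly implies $\phi$ is smooth at $q$ by the infinitesimal criterion. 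I expect the main obstacle to be the bookkeeping in the previous paragraph: correctly identifying all three tangent spaces, checking that the determinant/Hilbert–Chow construction induces the claimed map on tangent vectors, and verifying that the reduction to the line-bundle case via Lemma \ref{GS-L2} is compatible with these identifications.
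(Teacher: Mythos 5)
Your proposal is correct and is essentially the paper's argument: the paper also uses Lemma \ref{GS-L2} to factor $q$ as $E\to L\to \mc O_D$, lifts a first-order deformation $v$ of $D$ to the deformation $f_1^*E\to f_1^*L\to f_1^*L\otimes\mc O_{\mc D}$ of $q$ (working with explicit $T={\rm Spec}\,k[\epsilon]/(\epsilon^2)$-points rather than the $\operatorname{Hom}$-group description of $T_q\mc Q$), verifies via the determinant computation that $\phi\circ v'=v$, and concludes smoothness from surjectivity of the differential between the smooth varieties $\mc Q$ and $S^dC$. One small remark: your clause ``provided $T_q\mc Q\to\operatorname{Hom}(L(-D),\mc O_D)$ is surjective'' has the arrow backwards and is in any case unnecessary --- the natural map goes $\operatorname{Hom}(L(-D),\mc O_D)\to\operatorname{Hom}(K',\mc O_D)=T_q\mc Q$ (precompose with $K'\twoheadrightarrow L(-D)$), and since its composite with $d\phi_q$ is the identity on $H^0(D,\mc O_D(D))$, surjectivity of $d\phi_q$ already follows.
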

\begin{proof}
We will show that the map of Zariski tangent spaces
$T_{q}\mathcal{Q}\rightarrow T_{\phi(q)}S^{d}C$
is surjective. Let $T:={\rm Spec}\,k[\epsilon]/(\epsilon^2)$
and let $t_0$ denote the closed point of $T$.
We will show that for any map
$$T\xrightarrow{v} S^{d}C$$ 
such that the image of the closed point of $T$ is 
$\phi(q)$, there is a map 
$T\xrightarrow{v'} \mathcal{Q}$, 
such that the closed point maps to $q$ and the 
following diagram commutes
\[
\begin{tikzcd}
& \mathcal{Q} \arrow[d,"\phi"] \\
T \arrow[r,"v"] \arrow[ru,dashed,"v'"] & S^{d}C
\end{tikzcd}
\]
By universal property of $S^{d}C$, 
the morphism $v$ corresponds to a quotient over $C\times T$ given by
$$\mathcal{O}_{C\times T}\xrightarrow{J_v} \mathcal{O}_\mathcal{D}\rightarrow 0$$
such that $\mathcal{O}_{\mathcal{D}}$ is $T$-flat and the restriction of $J_v$ to 
$C\times {\rm Spec}\,k$ is equivalent to the quotient 
$[\mathcal{O}_C\rightarrow \mathcal{O}_{D}]$ (which corresponds to the point $\phi(q)$). 
Note that $\mc O_{\mc D}$ is an Artinian ring.

Let $f_1:C\times T\to C$ and $f_2:C\times T\to T$ denote the projections.
We fix a line bundle $L$ over $C$ as in Lemma \ref{GS-L2}. 
We have $1\otimes J_v:f_1^*L\to f_1^*L\otimes \mc O_{\mc D}$.
Define a quotient $J_{v'}$ over $C\times T$ as the composition
\begin{equation}\label{lift-tv}
J_{v'}:f^{*}_{1}E\rightarrow f^{*}_{1}L\xrightarrow{1\otimes J_v} f^{*}_{1}L\otimes \mc O_{\mc D}\cong \mc O_{\mc D}\,. 
\end{equation}

Clearly, $f^{*}_{1}L\otimes \mc O_{\mc D}$ is $T$-flat and 
$J_{v'}|_{C\times \{\text{Spec }k\}}$ is equivalent to $q$ by Lemma \ref{GS-L2}.
Hence, $J_{v'}$ induces a morphism $v':T\rightarrow \mathcal{Q}$. 
Next we show that $\phi\circ v'=v$. Let us denote the kernel of $J_{v'}$ 
by $E_{v'}$. Thus, we have an exact sequence
$$0\to E_{v'}\stackrel{\iota}{\longrightarrow} f_1^*E\stackrel{J_{v'}}{\longrightarrow} \mc O_{\mc D}\to 0\,.$$
Let $t_0$ denote the closed point of $T$. 
From the $T$-flatness of $\mc O_{\mc D}$ we conclude that 
$E_{v'}\vert_{C\times t_0}$ is locally free. Now using Nakayama's 
lemma we conclude that $E_{v'}$ is locally free sheaf on $C\times T$.
Recall from the definition of $\phi$, that the map 
$\phi\circ v'$ is given by the following quotient on $C\times T$
$$0\to {\rm det}(E_{v'})\otimes {\rm det}(f_1^*E)^{-1}\xrightarrow{{\rm det}(\iota)} \mc O_{C\times T}\to \mc F\to 0\,.$$
Passing to the local
rings at $(c,t_0)$ and using equation \eqref{lift-tv}, 
it is easily checked that $\mc F\cong \mc O_{\mc D}$
and that the above quotient is exactly $J_v:\mc O_{C\times T}\to \mc O_{\mc D}$.
This completes the proof of the lemma.
\end{proof}

\begin{proposition}\label{Generic Smoothness}
In every fiber of $\phi$ there is a point at which 
$\phi$ is a smooth morphism.
\end{proposition}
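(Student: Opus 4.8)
The plan is to reduce the statement to Lemma \ref{GS-L3}. That lemma asserts that $\phi$ is smooth at any point of $\mc Q$ given by a quotient of the shape $E\twoheadrightarrow\mc O_D$, so it suffices to show that every fiber of $\phi$ contains such a point. A closed point of $S^dC$ is an effective divisor $D=\sum_{i=1}^k d_i[c_i]$ of degree $d$, and the fiber over it is $\mc Q_D$; thus I need to produce a surjection $q\colon E\twoheadrightarrow\mc O_D$ whose image under $\phi$ is exactly $D$.

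For the existence of the surjection, note that $\mc O_D\cong\bigoplus_{i=1}^k\mc O_{C,c_i}/\mf m_{c_i}^{d_i}$ is a torsion sheaf supported on the finite set $\{c_1,\dots,c_k\}$, so $\mathrm{Hom}(E,\mc O_D)=\bigoplus_i\mathrm{Hom}(E_{c_i},\mc O_{C,c_i}/\mf m_{c_i}^{d_i})$ and surjectivity of a homomorphism $E\to\mc O_D$ can be tested stalkwise at the $c_i$. Since each $E_{c_i}$ is a free $\mc O_{C,c_i}$-module of rank $r=\mathrm{rank}\,E\ge 1$, it admits a surjection onto the cyclic module $\mc O_{C,c_i}/\mf m_{c_i}^{d_i}$; assembling these over $i$ gives a surjection $q\colon E\twoheadrightarrow\mc O_D$. (Alternatively one may argue as in Lemma \ref{GS-L2}: a general homomorphism $E\to L$ to a line bundle $L$ of sufficiently large degree is surjective, since its degeneracy locus has expected codimension $r\ge 2>\dim C$, and one then composes with $L\twoheadrightarrow L|_D\cong\mc O_D$.)

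It remains to check $\phi(q)=D$. Writing $0\to K\to E\xrightarrow{q}\mc O_D\to 0$, the morphism $\phi$ sends $q$ to the effective divisor cut out by the inclusion $\det K\hookrightarrow\det E$ (see Section \ref{section-Hilbert-Chow}). Choosing at each $c_i$ a basis of $E_{c_i}$ adapted to $q$, one computes $\det K=\det E\otimes\mc O_C(-D)$, so the cokernel of $\det K\hookrightarrow\det E$ is $\mc O_D$ and $\phi(q)=D$. Hence $q\in\mc Q_D$, and Lemma \ref{GS-L3} then gives that $\phi$ is smooth at $q$. The argument is short, and the only points needing care — the existence of $q$ and the identity $\phi(q)=D$ — are purely local at the support of $D$, so I do not expect a genuine obstacle here; the substance has already been absorbed into Lemmas \ref{GS-L2} and \ref{GS-L3}.
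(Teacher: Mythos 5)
The proposal is correct and follows essentially the same route as the paper: both reduce to Lemma \ref{GS-L3} by exhibiting in each fiber a point given by a surjection $E\twoheadrightarrow\mc O_D$ (the paper via a line bundle quotient $E\to L\to L\otimes\mc O_D$, you via a stalkwise construction, with the paper's construction noted as your alternative). Your extra verification that $\phi(q)=D$ via the determinant of the kernel is a correct elaboration of a step the paper leaves implicit.
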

\begin{proof}
Let $D$ be the divisor corresponding to a point $x\in S^dC$.
Fix a line bundle $L$ which is a surjective quotient of 
$E$. Then the composite $q:E\to L\to L\otimes \mc O_D$ 
is a quotient such that $\phi(q)=x$. The proposition now 
follows from lemma \ref{GS-L3}.
\end{proof}

\section{The space $S_d$}\label{S_d}
Let the rank of the vector bundle 
$E$ be $r$. We will inductively define  
$(S_{d},A_{d})$, where $A_{d}$ is a vector bundle of rank $r$ defined over 
$C\times S_{d}$. 

Define 
$S_{0}={\rm Spec}\,k$ and $A_{0}=E$. To define $(S_{j},A_{j})$ 
we assume that we have defined $(S_{j-1},A_{j-1})$. 
Let $i_{j-1}:\{c\}\times S_{j-1}\hookrightarrow C\times S_{j-1}$ 
be the natural closed immersion.
Define 
$S_{j}:=\mathbb{P}(i^{*}_{j-1}A_{j-1})$ 
and let $f_{j,j-1}:S_{j}\rightarrow S_{j-1}$ be the structure morphism. 
Finally let $F_{j,j-1}:=id_C\times f_{j,j-1}:C\times S_{j}\rightarrow C\times S_{j-1}$. 
Let $p_{1,j}$ and $p_{2,j}$ be the projections from $C\times S_{j}$ 
to $C$ and $S_{j}$, respectively. 

For each $j$, we have the following diagram
\[
\begin{tikzcd}
\{c\}\times S_{j} \arrow[r,hook,"i_j"] \arrow[rd, "="]& C\times S_{j} \arrow[r,"F_{j,j-1}"] 
		\arrow[d,"p_{2,j}"] & C\times S_{j-1} \arrow[d,"p_{2,j-1}"] \\
	& S_j \arrow[r,"f_{j,j-1}"] & S_{j-1} \arrow[u, bend right=90, "i_{j-1}", labels=below right]
\end{tikzcd}
\]
Let $\mathcal{O}_{j}(1)$ the universal line bundle over $S_{j}$.
Then over $C\times S_{j}$ we have the quotient
\begin{align*}
F^{*}_{j,j-1}A_{j-1}\rightarrow & (i_{j})_{*}i^{*}_{j}F^{*}_{j,j-1}A_{j-1} \\
                              = & (i_{j})_{*}f^{*}_{j,j-1}i_{j-1}^*A_{j-1} \\
                    \rightarrow & (i_{j})_{*}\mathcal{O}_{j}(1)
\end{align*}
Define $A_{j}$ to be the kernel of the above quotient. Since 
$(i_j)_*\mc O_j(1)$ is flat over $S_j$, restricting the exact sequence 
\begin{equation}\label{univ-seq-A_j}
	0\to A_j\to F^*_{j,j-1}A_{j-1}\to (i_j)_*\mc O_j(1)\to 0 
\end{equation}
to $C\times s$ we see that $A_j\vert_{C\times s}$ is torsion free
and so is locally free. It follows from Nakayama's lemma 
that $A_j$ is locally free on $C\times S_j$. 
Thus, we have defined $(S_j,A_j)$. It is easy to see, for example, 
using equation \eqref{e10} in the proof of the next Lemma, that closed 
points of $S_d$ are in 1-1 correspondence with filtrations
\begin{equation}\label{e8}
	E_d\subset E_{d-1} \subset E_{d-2} \subset \cdots \subset E_0=E
\end{equation}
where each $E_j$ is a locally free sheaf of rank $r$ on $C$
and $E_j/E_{j+1}$ is a skyscraper sheaf of rank one supported at 
$c\in C$. 

\section{Birationality of $S_d$ and $\mc Q_{d[c]}$}\label{subsection-birational}
Define the following morphisms for $j>i$:
\begin{align*}
	f_{j,i}&=f_{j,j-1}\circ \ldots \circ f_{i+1,i}:S_{j}\to S_{i},\\
	F_{j,i}&=F_{j,j-1}\circ \ldots \circ F_{i+1,i}:C\times S_{j}\to C\times S_{i}\,.
\end{align*}
Note that both of these morphisms are flat.
Let $V\subset \mc Q_{d[c]}$ be the open subset 
whose points parameterize quotients of the type $E\to \mc O_C/\mf m^d_{C,c}$. 

\begin{lemma}\label{fibre-l2}
There exists a morphism $g_{d}:S_{d}\rightarrow \mc Q_{d[c]}$ such that 
\begin{enumerate}[(i)]
	\item $g_d$ is surjective on closed points,
	\item $g_d^{-1}(V)\to V$ is a bijection,
	\item $S_d\setminus g_d^{-1}(V)\to \mc Q_{d[c]}\setminus V$ has positive dimensional fibers.
\end{enumerate}
\end{lemma}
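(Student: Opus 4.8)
The plan is to construct $g_d$ using the universal filtration structure on $S_d$, then analyze its fibers. First I would build the morphism: a closed point of $S_d$ corresponds to a filtration $E_d \subset E_{d-1} \subset \cdots \subset E_0 = E$ with each successive quotient a length-one skyscraper at $c$; forgetting everything except $E_d$ gives a quotient $E \to E/E_d$ of colength $d$ supported (set-theoretically) at $c$, hence a point of $\mc Q_{d[c]}$. To make this a morphism of schemes rather than just a map on points, I would take the universal bundle $A_d$ on $C \times S_d$ together with the composite of the inclusions from the sequences \eqref{univ-seq-A_j}, namely $A_d \hookrightarrow F_{d,0}^* A_0 = p_{1,d}^* E$, and set $B_d := p_{1,d}^* E / A_d$. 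Since $A_d$ is locally free of rank $r$ and its restriction to each $C \times s$ has torsion quotient of length $d$, the sheaf $B_d$ is flat over $S_d$ with fiberwise Hilbert polynomial equal to $d$, so by the universal property of the Quot scheme it defines $g_d : S_d \to \mc Q_{d[c]}$. One must check the quotient lands in the fiber over $d[c] \in S^d C$: the determinant of $A_d \hookrightarrow p_{1,d}^* E$ vanishes to total order $d$ along $\{c\} \times S_d$ by construction, so $\phi \circ g_d$ is constant equal to $d[c]$.

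Next, for (i) and (ii), I would identify $V$: a quotient $E \to \mc O_C/\mf m_{C,c}^d$ has kernel $E'$ with $E/E'$ a cyclic $\mc O_{C,c}$-module of length $d$; such $E'$ sits in a \emph{unique} complete flag $E' = E_d \subset E_{d-1} \subset \cdots \subset E_0 = E$, since $\mc O_C/\mf m^d$ has a unique composition series (its submodules $\mf m^i/\mf m^d$ are totally ordered). This gives a well-defined set-theoretic inverse to $g_d$ over $V$, and combined with surjectivity of $g_d$ on closed points over $V$ — which follows because every such $E'$ arises from the inductively defined flag, each step being a choice of line in the fiber $i_{j-1}^* A_{j-1}$, i.e. a point of $S_j = \mb P(i_{j-1}^* A_{j-1})$ — we get that $g_d^{-1}(V) \to V$ is a bijection on closed points. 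For full surjectivity in (i), note that an arbitrary torsion quotient $E \to T$ of length $d$ supported at $c$ has kernel $E'$ which \emph{admits} at least one complete flag refining $E' \subset E$ (any filtration of the finite-length module $E/E'$ by length-one quotients lifts to a flag of subsheaves), and that flag is a point of $S_d$ mapping to the given quotient.

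For (iii), the point is that when $T = E/E'$ is \emph{not} cyclic as $\mc O_{C,c}$-module — equivalently the quotient does not lie in $V$ — there is more than one complete flag refining $E' \subset E$, and in fact a positive-dimensional family of them. Concretely, if $E/E' \cong \bigoplus_{j} \mc O_C/\mf m^{a_j}$ with at least two nonzero summands (the general non-cyclic case, after possibly several steps), then the choices of line at each stage that are compatible with landing on $E'$ form a positive-dimensional subvariety of the relevant projective bundle; I would exhibit this by a direct local computation at $c$, e.g. the set of lines in a $2$-dimensional $\mc O/\mf m^a$-module yielding a fixed quotient module can be parametrized by an affine space of positive dimension. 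Alternatively, and more cleanly, I would argue by dimension count: both $S_d$ and $\mc Q_{d[c]}$ have the same dimension (each equals $dr$, computed from the tower of $\mb P^{r-1}$-bundles on the one side and from $\mc Q_{d[c]}$ being the fiber of the smooth-in-a-point map $\phi$ on the other, via Lemma \ref{GS-L3} and Proposition \ref{Generic Smoothness}), $g_d$ is proper and birational (by (ii), it is an isomorphism over the dense open $V$), hence any fiber over a point of $\mc Q_{d[c]} \setminus V$ that is not finite must be positive-dimensional — and finiteness everywhere would force $g_d$ to be finite birational onto a normal target, hence an isomorphism, contradicting that $V \subsetneq \mc Q_{d[c]}$ unless the complement is empty, which it is not for $d \geq 2$.

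I expect the main obstacle to be part (iii): showing the fibers over the complement are genuinely positive-dimensional rather than merely non-isomorphisms. The dimension-count route is the cleanest, but it requires pinning down $\dim \mc Q_{d[c]}$ precisely and knowing $\mc Q_{d[c]}$ is irreducible (or at least equidimensional) so that "not an isomorphism over a point" upgrades to "positive-dimensional fiber there"; if that irreducibility is not yet available at this point in the paper, I would instead do the explicit local flag computation at $c$, which is elementary but a little tedious. Constructing $g_d$ as a morphism (the flatness of $B_d$ over $S_d$) and the uniqueness of the flag over $V$ are both routine.
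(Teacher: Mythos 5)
Your proposal matches the paper's proof in all essentials: the morphism $g_d$ is built from the flat quotient $p_1^*E\to p_1^*E/A_d$ exactly as in the paper, (i) comes from refining any colength-$d$ subsheaf to a complete flag, (ii) from the uniqueness of the composition series of the cyclic module $\mc O_C/\mf m^d_{C,c}$, and (iii) from exhibiting a positive-dimensional family of flags when $\mc F_d\otimes k(c)$ has rank $\geq 2$, which is precisely the paper's argument. Your ``cleaner'' dimension-count alternative for (iii) should indeed be discarded, as you suspected: irreducibility, normality, and the dimension of $\mc Q_{d[c]}$ (which is $d(r-1)$, not $dr$) are all deduced \emph{from} this lemma later in the paper, so that route is circular here.
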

\begin{proof}
We will define a quotient of $p_1^*E$ on $C\times S_{d}$.
Using the flatness of $F_{d,i}$, we have inclusions (recall the definition 
of $A_j$ from \eqref{univ-seq-A_j})
\begin{equation}\label{e10}
	A_{d}\subset F_{d,d-1}^{*}A_{d-1}\subset F_{d,d-2}^{*}A_{d-2} \subset \ldots 
		\subset F_{d,1}^{*}A_{1}\subseteq p^{*}_{1}E. 
\end{equation}
Define 
\begin{align*}
	B_{j}^d&:=p^{*}_{1}E/F_{d,j}^{*}A_{j}\\
		&\cong F_{d,j}^{*}(p^{*}_{1}E/A_j)
\end{align*}
For each $j$ there is an exact sequence on $C\times S_d$
\begin{equation}\label{eq1}
0 \to F_{d,j-1}^{*}A_{j-1}/F_{d,j}^{*}A_j \to  B_{j}^d \to B_{j-1}^d \to 0\,.
\end{equation}
On $C\times S_j$ we have the quotient \eqref{univ-seq-A_j} 
$$0\to A_j\to F_{j,j-1}^*A_{j-1}\to F_{j,j-1}^{*}A_{j-1}/A_j\cong (i_j)_*(\mc O_j(1))\to 0\,.$$
As $F_{j,j-1}^{*}A_{j-1}/A_j$ is $S_j$-flat, the pullback along $F_{d,j}$, that is,  
$F_{d,j-1}^{*}A_{j-1}/F_{d,j}^{*}A_j$
is $S_{d}$-flat. When restricted to $C\times s$ 
for $s\in S_{d}$, it is a degree one torsion sheaf supported 
at $c$. By induction on $j$, using equation \eqref{eq1}, 
one sees that $B_j^d$ is $S_d$-flat and 
the restriction of $B_j^d$ to $C\times s$ is a torsion sheaf 
of degree $j$ supported at $c$. In particular, 
\begin{equation}\label{e2}
0\to A_d\to p^{*}_{1}E\rightarrow B^{d}_{d} \to 0
\end{equation}
is  a quotient such that 
$B^{d}_{d}$ is $S_{d}$ flat and for each 
$s\in S_{d}$, $B^{d}_{d}|_{C\times \{s\}}$ is a torsion sheaf 
of degree $d$ supported at $c$.
By the universal property of $\mathcal{Q}$, we have a morphism 
$$S_{d}\rightarrow \mathcal{Q},$$ 
such that the set theoretic image of the composition 
$$S_{d}\rightarrow \mathcal{Q}\xrightarrow{\phi} S^{d}C$$ 
is the point $d[c]$. Since $S_{d}$ is reduced,
the scheme theoretic image is the scheme 
$\{d[c]\}\hookrightarrow S^{d}C$.
In other words, we get that the above morphism factors as
\[
\begin{tikzcd}
S_{d} \arrow[r,"g_{d}"] & \mc Q_{d[c]} \arrow[r,hook ] & \mathcal{Q}\,.
\end{tikzcd}
\]

A closed point of $S_d$ corresponds to a filtration as in 
\eqref{e8}. Under $g_d$ this point maps to the quotient $E\to E/E_d$.
Conversely, given a closed point of $\mc Q_{d[c]}$ it is clear that we 
can find a closed point of $S_d$ which maps to it. This proves (i). 
Suppose $E_d\subset E$ is such that $E/E_d\cong \mc O/\mf{m}^d_{C,c}$,
then for every $0\leq j\leq d$ there is a unique $E_j$ 
such that $E_d\subset E_j\subset E$ and $E/E_j\cong \mc O/\mf{m}^j_{C,c}$.
From this one easily concludes (ii).

For a closed point in $\mc Q_{d[c]} \setminus V$, corresponding to a quotient 
$E\to \mc F_d$, we have ${\rm rank}(\mc F_d\otimes \mc O/\mf{m}_{C,c})\geq 2$. 
We can construct infinitely many chains 
$\mc F_d\to \mc F_{d-1} \to \ldots \to \mc F_1$. 
Therefore, the fiber over such a closed point is positive 
dimensional. This proves $(iii)$.
\end{proof}

\begin{corollary}
The fiber $\mc Q_{d[c]}$ is irreducible of dimension $d(r-1)$. 
\end{corollary}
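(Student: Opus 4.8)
The plan is to deduce irreducibility of $\mc Q_{d[c]}$ from the birational-type map $g_d : S_d \to \mc Q_{d[c]}$ constructed in Lemma \ref{fibre-l2}, together with an induction showing $S_d$ is irreducible of the expected dimension. First I would observe that $S_d$ is irreducible: indeed $S_0 = {\rm Spec}\,k$ is irreducible, and each $S_j = \mb P(i_{j-1}^* A_{j-1})$ is a projective bundle over $S_{j-1}$, hence irreducible over an irreducible base; since the fibers have dimension $r-1$, we get $\dim S_j = \dim S_{j-1} + (r-1)$, so $\dim S_d = d(r-1)$. Then, since $g_d$ is surjective on closed points by part (i) of Lemma \ref{fibre-l2} and $S_d$ is irreducible, the scheme-theoretic image — which is all of $\mc Q_{d[c]}$ set-theoretically — is irreducible; more precisely $\mc Q_{d[c]}$, being covered by the image of an irreducible scheme, has a unique irreducible component, so $(\mc Q_{d[c]})_{\rm red}$ is irreducible.

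For the dimension count I would argue that $g_d$ restricted to $g_d^{-1}(V) \to V$ is a bijection on closed points (part (ii)), and $V$ is a dense open subset of $\mc Q_{d[c]}$ (it is nonempty since quotients $E \to \mc O_C/\mf m_{C,c}^d$ exist, and $\mc Q_{d[c]}$ is irreducible by the previous paragraph). Hence $\dim \mc Q_{d[c]} = \dim V = \dim g_d^{-1}(V)$. Since $g_d^{-1}(V)$ is a nonempty open subset of the irreducible scheme $S_d$, it has dimension $d(r-1)$, which gives $\dim \mc Q_{d[c]} = d(r-1)$ as claimed. Alternatively, one could check directly that at a point $q \in V$ the Quot scheme $\mc Q$ is smooth of the expected dimension and that $\phi$ is smooth there (by Lemma \ref{GS-L3}), so the fiber $\mc Q_{d[c]}$ has the correct dimension near $q$; but the route through $S_d$ is cleaner.

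The main point requiring care is the passage from "$g_d$ is surjective on closed points with $S_d$ irreducible" to "$\mc Q_{d[c]}$ is irreducible." This is a standard fact — the image of an irreducible space is irreducible, and a scheme whose underlying topological space is covered by (the image of) an irreducible space, with that image dense, is itself irreducible — but one should be mildly careful that $g_d$ need not be dominant onto a single component a priori; surjectivity on closed points of a Jacobson scheme handles this, since closed points are dense. I do not expect any serious obstacle here; the genuinely substantive work (constructing $g_d$, establishing (i)–(iii)) has already been done in Lemma \ref{fibre-l2}, and this corollary is essentially a formal consequence together with the elementary induction computing $\dim S_d$.
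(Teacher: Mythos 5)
Your proof is correct and follows the same overall strategy as the paper: irreducibility and the upper bound on the dimension both come from the surjection $g_d:S_d\to \mc Q_{d[c]}$ together with the observation that $S_d$, being an iterated $\mathbb{P}^{r-1}$-bundle over ${\rm Spec}\,k$, is irreducible of dimension $d(r-1)$. The one place you diverge is the lower bound $\dim \mc Q_{d[c]}\geq d(r-1)$: the paper gets this by noting that the general fiber of $\phi$ has dimension $d(r-1)$ and invoking (upper semicontinuity of) fiber dimension, whereas you get it from part (ii) of Lemma \ref{fibre-l2} --- the bijection $g_d^{-1}(V)\to V$ forces $\dim V=\dim g_d^{-1}(V)=d(r-1)$, and $V$ is dense in the irreducible fiber. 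Both arguments are sound; yours is slightly more self-contained (it does not require knowing $\dim\mc Q=dr$ or the structure of a general fiber), at the cost of having to check that $V$ is nonempty, which is immediate from Lemma \ref{GS-L2}. Your caution about passing from surjectivity on closed points to irreducibility is well placed but harmless here, since all schemes involved are of finite type over $k$ and hence Jacobson.
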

\begin{proof}
Since $S_d$ is irreducible, it is clear that $\mc Q_{d[c]}$ is irreducible.
It is clear that the dimension of $S_d$ is $d(r-1)$. Thus, 
the dimension of $\mc Q_{d[c]}$ is at most $d(r-1)$. On the other hand,
the dimension of the fiber of $\phi$ over a general point is $d(r-1)$. 
This shows that the dimension of $\mc Q_{d[c]}$ is at least $d(r-1)$.
\end{proof}

\begin{corollary}
The codimension of $\mc Q_{d[c]}\setminus V$ in $\mc Q_{d[c]}$ is 
$\geq 2$.
\end{corollary}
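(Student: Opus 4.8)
The plan is to stratify $\mc Q_{d[c]}\setminus V$ by the isomorphism class of the torsion quotient sheaf as a module over the local ring $\mc O_{C,c}$, and to bound the dimension of each stratum by a short linear-algebra computation.

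First I would make the problem local at $c$. Write $\mf m=\mf m_{C,c}$. A closed point of $\mc Q_{d[c]}$ is a quotient $E\to\mc F$ with $\mc F$ torsion of length $d$ supported at $c$; if $\mf m^{a}\mc F=0$ then the quotient factors through $E\otimes\mc O_{C,c}/\mf m^{a}\cong(\mc O_{C,c}/\mf m^{a})^{r}$, and conversely such a quotient of $(\mc O_{C,c}/\mf m^{a})^{r}$ extends uniquely to a torsion quotient of $E$ supported at $c$. Since $\mc O_{C,c}$ is a discrete valuation ring, finitely generated torsion $\mc O_{C,c}$-modules of length $d$ are classified up to isomorphism by partitions $\lambda=(\lambda_{1}\geq\dots\geq\lambda_{\ell})$ of $d$, with $\mc F_{\lambda}\cong\bigoplus_{i=1}^{\ell}\mc O_{C,c}/\mf m^{\lambda_{i}}$, and the open set $V$ is exactly the locus of the cyclic type $\lambda=(d)$. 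Thus $\mc Q_{d[c]}\setminus V=\bigcup_{\ell(\lambda)\geq2}Z_{\lambda}$, a finite union, where $Z_{\lambda}\subset\mc Q_{d[c]}$ is the constructible locus of quotients whose target has type $\mc F_{\lambda}$ (and $Z_{\lambda}=\varnothing$ unless $\ell(\lambda)\leq r$).

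Next I would compute $\dim Z_{\lambda}$ via an orbit map. With $a=\lambda_{1}$, the standard construction of Quot schemes gives, over the open subscheme $\mathrm{Surj}\big((\mc O_{C,c}/\mf m^{a})^{r},\mc F_{\lambda}\big)\subset\mathrm{Hom}\big((\mc O_{C,c}/\mf m^{a})^{r},\mc F_{\lambda}\big)\cong\mc F_{\lambda}^{\oplus r}$, a flat family of length-$d$ torsion quotients, hence a morphism to $\mc Q_{d[c]}$ with image $Z_{\lambda}$. Two surjections have the same kernel precisely when they differ by an element of $\mathrm{Aut}(\mc F_{\lambda})$, and this action is free because the maps are surjective, so all fibres of the orbit map have dimension $\dim_{k}\mathrm{End}(\mc F_{\lambda})$. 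Since $\dim_{k}\mathrm{Hom}\big((\mc O_{C,c}/\mf m^{a})^{r},\mc F_{\lambda}\big)=rd$, this gives $\dim Z_{\lambda}=rd-\dim_{k}\mathrm{End}(\mc F_{\lambda})$; and from $\mathrm{Hom}(\mc O/\mf m^{b},\mc O/\mf m^{c})\cong\mc O/\mf m^{\min(b,c)}$ one gets $\dim_{k}\mathrm{End}(\mc F_{\lambda})=\sum_{i,j}\min(\lambda_{i},\lambda_{j})=d+2\sum_{i<j}\min(\lambda_{i},\lambda_{j})$.

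Finally, since $\mc Q_{d[c]}$ is irreducible of dimension $d(r-1)=rd-d$ by the previous corollary,
$$\mathrm{codim}_{\mc Q_{d[c]}}Z_{\lambda}=\dim_{k}\mathrm{End}(\mc F_{\lambda})-d=2\sum_{i<j}\min(\lambda_{i},\lambda_{j})\geq 2\binom{\ell(\lambda)}{2}\geq 2$$
whenever $\ell(\lambda)\geq 2$, each unordered pair contributing at least $1$. Taking the minimum over the finitely many non-cyclic partitions of $d$ yields $\mathrm{codim}_{\mc Q_{d[c]}}(\mc Q_{d[c]}\setminus V)\geq 2$ (with equality, attained by $\lambda=(d-1,1)$). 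The one step that needs genuine care is the second paragraph: setting up $Z_{\lambda}$ as the image of a legitimate morphism of finite-type $k$-schemes whose fibres all have dimension $\dim_{k}\mathrm{End}(\mc F_{\lambda})$, so that the dimension count is valid; after that the argument is just the module theory over a discrete valuation ring.
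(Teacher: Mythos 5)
Your argument is correct, but it is a genuinely different route from the one in the paper. The paper deduces the corollary in one line from the machinery it has already built: by Lemma \ref{fibre-l2}, $g_d$ maps the proper closed subset $S_d\setminus g_d^{-1}(V)$ of the irreducible $d(r-1)$-dimensional variety $S_d$ onto $\mc Q_{d[c]}\setminus V$ with positive-dimensional fibers, so $\dim\big(\mc Q_{d[c]}\setminus V\big)\leq \big(d(r-1)-1\big)-1$. Your proof bypasses $S_d$ entirely and instead stratifies $\mc Q_{d[c]}$ by the $\mc O_{C,c}$-module type $\lambda\vdash d$ of the quotient, presenting each stratum $Z_\lambda$ as the quotient of the irreducible variety $\mathrm{Surj}\big((\mc O_{C,c}/\mf m^{\lambda_1})^r,\mc F_\lambda\big)$ by the free action of $\mathrm{Aut}(\mc F_\lambda)$; the identity $\dim_k\mathrm{End}(\mc F_\lambda)=\sum_{i,j}\min(\lambda_i,\lambda_j)$ then gives $\mathrm{codim}\,Z_\lambda=2\sum_{i<j}\min(\lambda_i,\lambda_j)$. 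All the steps check out (the tautological constant family over $\mathrm{Surj}$ makes the orbit map a genuine morphism, fibers are exactly $\mathrm{Aut}(\mc F_\lambda)$-orbits of dimension $\dim_k\mathrm{End}(\mc F_\lambda)$ since $\mathrm{Aut}$ is open in $\mathrm{End}$ by Nakayama, and the previous corollary supplies $\dim\mc Q_{d[c]}=d(r-1)$). What your approach buys is strictly more information: the exact codimension of every stratum and the fact that the overall codimension is exactly $2$, attained at $\lambda=(d-1,1)$; it is also self-contained, reproving the standard cell-type decomposition of the punctual Quot scheme. What it costs is that it is longer than the paper's argument and does not reuse the construction of $S_d$, which the paper needs anyway for the birationality and normality statements later on.
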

\begin{proof}
As $S_d$ and $\mc Q_{d[c]}$ have the same dimension and $S_d$ is irreducible, 
this follows easily using $(iii)$ in lemma \ref{fibre-l2}.
\end{proof}

\begin{corollary}\label{fiber-R1}
The fiber $\mc Q_{d[c]}$ satisfies Serre's condition $(R_1)$.
\end{corollary}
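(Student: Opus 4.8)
The plan is to pin down the regular locus of $\mc Q_{d[c]}$ from above by the dense open set $V$, and then feed this into the already-proved bound $\operatorname{codim}_{\mc Q_{d[c]}}(\mc Q_{d[c]}\setminus V)\ge 2$. Recall that Serre's condition $(R_1)$ is precisely the assertion that $\mathcal O_{\mc Q_{d[c]},\mathfrak p}$ is a regular local ring for every point $\mathfrak p$ of codimension $\le 1$. Since $\mc Q_{d[c]}$ is irreducible and of finite type over $k$, no point of codimension $\le 1$ can lie in the closed set $\mc Q_{d[c]}\setminus V$, all of whose components have codimension $\ge 2$; hence it suffices to prove that the open subscheme $V\subseteq\mc Q_{d[c]}$ is regular.

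To see that $V$ is regular, I would observe that by definition the closed points of $V$ parameterize quotients of the form $E\to\mathcal O_C/\mf m^d_{C,c}$, and that $\mathcal O_C/\mf m^d_{C,c}=\mathcal O_C/\mathcal O_C(-d[c])$ is exactly the sheaf $\mathcal O_D$ appearing in \eqref{thick-points} with $D=d[c]$. Therefore Lemma \ref{GS-L3} applies at every point of $V$ and shows that $\phi\colon\mc Q\to S^dC$ is a smooth morphism on a neighbourhood of $V$, i.e.\ $V$ is contained in the open locus $U\subseteq\mc Q$ over which $\phi$ is smooth. Since $d[c]$ is a closed point of $S^dC$ with residue field $k$ and smoothness is stable under base change, the scheme-theoretic fibre $U\cap\mc Q_{d[c]}=U\times_{S^dC}\operatorname{Spec}k$ is smooth over $k$, hence regular; as $V$ is an open subscheme of $U\cap\mc Q_{d[c]}$, it follows that $V$ is regular. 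Combining this with the codimension estimate recalled above yields $(R_1)$.

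I do not expect a genuine obstacle here: the substantive input — Lemma \ref{GS-L3}, which is applicable precisely because points of $V$ are ``thick point'' quotients $E\to\mathcal O_C/\mf m^d_{C,c}$, together with the corollary bounding $\operatorname{codim}(\mc Q_{d[c]}\setminus V)$ — is already available, and the only thing requiring a little care is the routine dimension bookkeeping that on an irreducible finite-type $k$-scheme a point of codimension $\le 1$ cannot meet a closed subset whose components all have codimension $\ge 2$.
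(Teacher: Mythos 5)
Your proposal is correct and follows the paper's own argument: the paper likewise deduces regularity of $\mc O_{\mc Q_{d[c]},v}$ for $v\in V$ from the smoothness of $\phi$ at such points (Lemma \ref{GS-L3}) and then invokes the codimension $\geq 2$ bound on $\mc Q_{d[c]}\setminus V$ to conclude $(R_1)$. The only difference is that you spell out the base-change step making the fibre over $d[c]$ smooth over $k$, which the paper leaves implicit.
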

\begin{proof}
Since the map $\phi$ is smooth at points $v\in V$, it follows 
that $\mc O_{\mc Q_{d[c]},v}$ is a regular local ring for all $v\in V$. 
Further, from the preceding corollary $V$ contains all prime ideals
of height 1. The corollary follows.
\end{proof}

Next we will show that $g_d$ is birational. Let 
\begin{equation*}
p^{*}_{1}E\rightarrow B' 
\end{equation*}
be the restriction of the universal quotient $B$ 
over $C\times \mc Q$ to the subscheme $C\times\mc Q_{d[c]}$.
Let us define the inclusion 
$$i:{\rm Spec}\,(\mathcal{O}_{C,c}/\mf{m}^{d}_{C,c})\times \mc Q_{d[c]}\hookrightarrow C\times \mc Q_{d[c]}\,.$$ 

\begin{lemma}\label{fibre-l3}
There is a coherent sheaf $F_d$ over 
${\rm Spec}\,(\mathcal{O}_{C,c}/\mf{m}^{d}_{C,c})\times \mc Q_{d[c]}$
such that $B'=i_*F_d$.
\end{lemma}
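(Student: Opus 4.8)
The plan is to prove that multiplication by $t^d$ annihilates $B'$, where $t\in\mc O_{C,c}$ is a uniformiser at $c$; equivalently, that $B'$ is a module over $\mc O_{C\times\mc Q_{d[c]}}/(t^d)=\mc O_{{\rm Spec}(\mc O_{C,c}/\mf m^d_{C,c})\times\mc Q_{d[c]}}$. Granting this, one takes $F_d:=i^*B'$, and the canonical surjection $B'\to i_*i^*B'$ — which is exactly the quotient map $B'\to B'/t^dB'$ — is an isomorphism.

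First I would record the two facts about $B'$ that do the work. Since the universal quotient $B$ on $C\times\mc Q$ is $\mc Q$-flat, its restriction $B'$ is flat over $\mc Q_{d[c]}$. For a closed point $q\in\mc Q_{d[c]}$ the sheaf $B'|_{C\times q}$ is a torsion quotient of $E$ of length $d$ whose associated $0$-cycle is $d[c]$; hence it is set-theoretically supported at $c$, and as a finite-length module over the discrete valuation ring $\mc O_{C,c}$ it is $\bigoplus_i\mc O_{C,c}/(t^{a_i})$ with $\sum_i a_i=d$, so $t^d$ kills it. Therefore ${\rm Supp}(B')\subseteq\{c\}\times\mc Q_{d[c]}$, which is finite over $\mc Q_{d[c]}$, and $t$ acts nilpotently on $B'$. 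Pushing forward by $p_2$, the sheaf $\mc A:=p_{2*}B'$ is locally free of rank $d$ on $\mc Q_{d[c]}$, and multiplication by $t$ induces an $\mc O_{\mc Q_{d[c]}}$-linear endomorphism $\tau$ of $\mc A$. Since $p_{2*}$ is exact and faithful on coherent sheaves supported on $\{c\}\times\mc Q_{d[c]}$ (this locus being finite over the base), it is enough to prove $\tau^d=0$.

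Next I would use Cayley--Hamilton together with the reducedness of $\mc Q_{d[c]}$. Let $\chi_\tau(\lambda)=\lambda^d+c_1\lambda^{d-1}+\cdots+c_d\in\mc O_{\mc Q_{d[c]}}[\lambda]$ be the characteristic polynomial of $\tau$. The generic point $\eta$ of the irreducible scheme $\mc Q_{d[c]}$ lies in the open set $V$ on which $\phi$ is smooth, so $\mc O_{\mc Q_{d[c]},\eta}=\kappa(\eta)$ and $\tau_\eta$ is a nilpotent endomorphism of the $d$-dimensional $\kappa(\eta)$-vector space $\mc A\otimes\kappa(\eta)$; hence $\chi_\tau$ specialises to $\lambda^d$ at $\eta$, i.e.\ every $c_i$ vanishes at $\eta$. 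Now $\mc Q_{d[c]}$ is reduced: it is generically reduced because its generic point lies in the smooth locus $V$, and it is Cohen--Macaulay because, locally around each of its points, it is cut out in the smooth variety $\mc Q$ by the $d$ functions $\phi^*x_1,\dots,\phi^*x_d$ pulled back from a regular system of parameters at $d[c]\in S^dC$, and as $\dim\mc Q_{d[c]}=d(r-1)=\dim\mc Q-d$ (with $\dim\mc Q=\dim S^dC+d(r-1)=dr$, $\phi$ being dominant with $d(r-1)$-dimensional general fibre) these $d$ elements form a regular sequence in each regular local ring $\mc O_{\mc Q,q}$; Serre's criterion $R_0+S_1$ then gives reducedness. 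A regular function on a reduced scheme that vanishes at the generic point is zero, so $c_1=\cdots=c_d=0$, hence $\chi_\tau(\lambda)=\lambda^d$ and $\tau^d=0$ by Cayley--Hamilton. Thus $t^dB'=0$ and $B'=i_*F_d$ with $F_d=i^*B'$.

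The step I expect to be the real obstacle is the passage from ``$\tau^d$ annihilates every fibre $B'|_{C\times q}$'' to ``$\tau^d=0$'': this is false over a general non-reduced base, so one genuinely needs that $\mc Q_{d[c]}$ has no embedded points, and the cleanest way to get that is its complete-intersection description inside the smooth variety $\mc Q$ combined with the already-established dimension count $\dim\mc Q_{d[c]}=d(r-1)$. An alternative that sidesteps working on $\mc Q_{d[c]}$ directly is to pull back along $g_d\colon S_d\to\mc Q_{d[c]}$: the filtration \eqref{eq1} presents $B^d_d$ as an iterated extension of $d$ sheaves each annihilated by $t$, so $t^dB^d_d=0$ on $C\times S_d$; one then descends this identity to $\mc Q_{d[c]}$ using that $g_d$ is dominant and that $\mc Q_{d[c]}$ has no embedded points. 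Either way the Cohen--Macaulay (equivalently, no-embedded-points) property of the fibre is the crux.
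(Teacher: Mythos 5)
Your proof is correct, but it takes a genuinely different route from the paper's. The paper's argument is a short determinant trick: writing $0\to A'\xrightarrow{h}p_1^*E\to B'\to 0$, Cramer's rule gives $\mc I\cdot p_1^*E\subset h(A')$ where $\mc I$ is the ideal generated by $\det(h)$; and because $\mc Q_{d[c]}$ is by definition the \emph{scheme-theoretic} fibre, the quotient $\det(p_1^*E)/\det(A')$ defining $\phi|_{\mc Q_{d[c]}}$ is literally $p_1^*(\mc O_C/\mf m^d_{C,c})$, so $\mc I=p_1^*\mc O_C(-dc)$ and $t^dB'=0$ at once. That argument needs no reducedness, which is why the paper can place this lemma before the normality section. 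Your route instead establishes $t^dB'=0$ via Cayley--Hamilton applied to multiplication by $t$ on the rank-$d$ locally free sheaf $p_{2*}B'$, and you correctly identify that this only works once $\mc Q_{d[c]}$ is known to have no embedded points; your complete-intersection argument for Cohen--Macaulayness (codimension $d$ cut out by $d$ pulled-back parameters in the smooth $\mc Q$, using the already-proved $\dim\mc Q_{d[c]}=d(r-1)$ and irreducibility) together with generic reducedness on $V$ is sound and is essentially the reducedness proof the paper gives later by a slightly different route (flatness of $\phi$ plus miracle flatness style arguments). So your proof is heavier and front-loads facts the paper defers, but it is complete and non-circular; the paper's determinant argument is the more economical one and, being insensitive to reducedness, is the ``right'' proof for a scheme-theoretic fibre.
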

\begin{proof}
It is enough to show that the $p^{*}_{1}(E\otimes \mathcal{O}(-dc))$ 
is contained in the kernel of $p_1^*E\to B'$. Denote the kernel by 
$A'$. Let $0\to E'\xrightarrow{h} E$ be locally free sheaves of the same rank on 
a scheme $Y$. Let $\mc I$ denote the ideal sheaf determined by 
${\rm det}(h)$. Then it is easy to see that $\mc IE\subset h(E')\subset E$.
Thus, it suffices to find the ideal sheaf corresponding to the following
exact sequence
\begin{equation}\label{univ-exact-Q_p}
0 \to A' \xrightarrow{h} p^{*}_{1}E \to B' \to 0
\end{equation}
on $C\times \mc Q_{d[c]}$.
By the definition of $\phi$, the map $\mc Q_{d[c]}\to \mc Q\xrightarrow{\phi} S^{d}C$
is given by the quotient 
$$0\to {\rm det}(A')\xrightarrow{{\rm det}(h)} {\rm det}(p_1^*E) \to \mc F\to 0$$
on $C\times \mc Q_{d[c]}$. But since the image of $\mc Q_{d[c]}$ under 
this morphism is precisely $d[c]$, it follows that this quotient is 
isomorphic to the quotient 
$$p_1^*\mc O_C\to p_1^*(\mc O_C/\mf m^d_{C,c})\,.$$
It is clear that the ideal sheaf $\mc I$ corresponding to 
the exact sequence \eqref{univ-exact-Q_p} is $p_1^*(\mc O_C(-dc))$.
The lemma now follows. 
\end{proof}

Define subschemes 
$$D_j:={\rm Spec}\,(\mathcal{O}_{C,c}/\mf{m}^{j}_{C,c})\times V\stackrel{\alpha_j}{\hookrightarrow} C\times V\,.$$
By the previous lemma there is a sheaf $F_d$ on 
$D_d$ such that $B'|_{C\times V}\cong (\alpha_d)_*(F_d)$. 
Clearly $F_d$ is flat over $V$ since $B'$ is.
\begin{lemma}\label{fibre-l4}
$F_d$ is a line bundle over $D_{d}$.
\end{lemma}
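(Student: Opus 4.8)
The plan is to check that $F_d$ is a line bundle by a fiberwise argument on $V$, using the Nakayama-type criterion: a coherent sheaf on $D_d = \mathrm{Spec}\,(\mc O_{C,c}/\mf m^d_{C,c})\times V$ that is flat over $V$ and restricts, on each closed fiber $\mathrm{Spec}\,(\mc O_{C,c}/\mf m^d_{C,c})\times v$, to a free module of rank one over the Artinian ring $\mc O_{C,c}/\mf m^d_{C,c}$, is locally free of rank one. Flatness of $F_d$ over $V$ is already recorded above (it follows from flatness of $B'$), so the content is entirely in the fiberwise statement. First I would fix a closed point $v\in V$; by definition of $V$ this $v$ parameterizes a quotient $E\to \mc O_C/\mf m^d_{C,c}$, so the restriction of $B'$ to $C\times v$ is $\mc O_C/\mf m^d_{C,c}$ viewed as a skyscraper sheaf. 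By Lemma \ref{fibre-l3} (and its refinement over $C\times V$ stated just before this lemma), $B'|_{C\times v}\cong (\alpha_d)_*\big(F_d|_{\{v\}}\big)$, where $F_d|_{\{v\}}$ is a module over $\mc O_{C,c}/\mf m^d_{C,c}$. Hence $F_d|_{\{v\}}\cong \mc O_{C,c}/\mf m^d_{C,c}$ as a module over itself, i.e. it is free of rank one.

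With the fiberwise statement in hand, the argument concludes as follows. The sheaf $F_d$ is coherent on $D_d$, which is finite and flat of degree $d$ over $V$; pushing forward to $V$ along the finite morphism $\pi: D_d\to V$, the sheaf $\pi_*F_d$ is coherent on $V$, and by flatness of $F_d$ over $V$ together with cohomology-and-base-change (the fibers being $0$-dimensional, so there is no higher cohomology), $\pi_*F_d$ is locally free on $V$ of rank equal to $\dim_k\big(F_d|_{\{v\}}\big) = d$. Since the fiber $F_d|_{\{v\}}$ is generated by a single element over $\mc O_{\pi^{-1}(v)}$ for every $v$, Nakayama's lemma shows that, locally on $V$, $F_d$ is generated by one element as an $\mc O_{D_d}$-module; this gives a surjection $\mc O_{D_d}\twoheadrightarrow F_d$ locally, and comparing lengths on fibers (both sides have fiber length $d$ over each $v\in V$) forces this surjection to be an isomorphism. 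Therefore $F_d$ is locally free of rank one over $D_d$, i.e. a line bundle.

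The only subtlety I anticipate is making sure the refinement of Lemma \ref{fibre-l3} from $\mc Q_{d[c]}$ to the open subset $V$ — namely the identification $B'|_{C\times V}\cong (\alpha_d)_*(F_d)$ with $F_d$ flat over $V$ — is genuinely available at this point; but this is exactly what is asserted in the sentence immediately preceding the statement of this lemma, so it may be invoked directly. Everything else is a standard application of flatness plus Nakayama, and no serious obstacle is expected. One should just be careful that the identification $B'|_{C\times v}\cong \mc O_C/\mf m^d_{C,c}$ uses the defining property of points of $V$, rather than of general points of $\mc Q_{d[c]}$, where the fiber could instead be a length-$d$ torsion sheaf of embedding dimension $\geq 2$.
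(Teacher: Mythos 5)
Your argument is correct and is essentially the paper's proof: the paper also observes that by definition of $V$ each fiber $(F_d)_v\cong \mc O_{C,c}/\mf m^d_{C,c}$ is free of rank one over the Artinian ring, and then concludes by $V$-flatness of $F_d$ and Nakayama's lemma. Your extra paragraph on $\pi_*F_d$ being locally free of rank $d$ is a harmless detour; the essential content is the fiberwise freeness plus the Nakayama/length-count step, exactly as in the paper.
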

\begin{proof}
By definition of $V$, for each $v\in V$, 
$(F_d)_v\cong \mc O_C/\mf m^d_{C,c}$. Using $F_d$ is $V$-flat
and Nakayama's lemma we see that $F_d$ is a line bundle. 
\end{proof}

\begin{corollary}
The restriction $F_j:=F_d\vert_{D_j}$ is a line bundle on $D_j$.
\end{corollary}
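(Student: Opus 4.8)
The plan is to deduce this immediately from the previous two lemmas by restriction. We have established in Lemma \ref{fibre-l4} that $F_d$ is a line bundle on $D_d = {\rm Spec}\,(\mc O_{C,c}/\mf m^d_{C,c})\times V$. Since $D_j = {\rm Spec}\,(\mc O_{C,c}/\mf m^j_{C,c})\times V$ is a closed subscheme of $D_d$ (the closed immersion being induced by the surjection $\mc O_{C,c}/\mf m^d_{C,c}\twoheadrightarrow \mc O_{C,c}/\mf m^j_{C,c}$ on the curve factor, crossed with the identity on $V$), the restriction $F_j = F_d\vert_{D_j}$ is a coherent sheaf on $D_j$.

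First I would observe that the formation of $F_j$ is compatible with restriction to fibers over $V$: for each closed point $v\in V$, we have $(F_j)_v \cong (F_d)_v\vert_{{\rm Spec}(\mc O_{C,c}/\mf m^j_{C,c})}$. By Lemma \ref{fibre-l4}, $(F_d)_v\cong \mc O_C/\mf m^d_{C,c}$ as a module over $\mc O_{C,c}/\mf m^d_{C,c}$, and restricting this to ${\rm Spec}(\mc O_{C,c}/\mf m^j_{C,c})$ (i.e.\ tensoring with $\mc O_{C,c}/\mf m^j_{C,c}$) gives $\mc O_C/\mf m^j_{C,c}$, which is a free rank-one module over $\mc O_{C,c}/\mf m^j_{C,c}$. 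Thus every fiber of $F_j$ over $V$ is one-dimensional over the corresponding residue structure.

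Next I would verify that $F_j$ is flat over $V$. This follows because $F_d$ is flat over $V$ (it is a line bundle on $D_d$, which is finite and flat over $V$), and the subscheme $D_j\hookrightarrow D_d$ corresponds to pulling back along a morphism on the $C$-factor alone; since everything is a product with $V$, flatness over $V$ is preserved under this restriction. Concretely, $F_j = F_d\otimes_{\mc O_{D_d}}\mc O_{D_j}$ and $\mc O_{D_j}$ is the pullback to $D_d$ of a coherent sheaf on the Artinian factor, so tensoring does not disturb $V$-flatness.

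Finally, having $F_j$ flat over $V$ with all fibers free of rank one over the Artinian local ring $\mc O_{C,c}/\mf m^j_{C,c}$, I would apply Nakayama's lemma exactly as in the proof of Lemma \ref{fibre-l4}: locally on $V$, a fiberwise generator lifts to a generator of $F_j$, giving a surjection $\mc O_{D_j}\twoheadrightarrow F_j$ whose kernel vanishes by flatness and the rank count. Hence $F_j$ is a line bundle on $D_j$. There is no real obstacle here; the only point to be careful about is the bookkeeping that restriction along the Artinian factor commutes with both the fiberwise computation over $V$ and with preservation of $V$-flatness, but this is routine since all schemes in sight are products over $V$.
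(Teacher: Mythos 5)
Your argument is correct in its conclusion, but it takes a substantially longer route than necessary: the paper offers no proof of this corollary because it is immediate from Lemma \ref{fibre-l4}. Once $F_d$ is known to be locally free of rank one on $D_d$, its pullback along the closed immersion $D_j\hookrightarrow D_d$ is locally free of rank one on $D_j$ (locally $F_d\cong\mc O_{D_d}$, so $F_j\cong\mc O_{D_j}$), and there is nothing left to check. Your detour through fibers over $V$, $V$-flatness, and Nakayama essentially re-runs the proof of Lemma \ref{fibre-l4} instead of using its conclusion. One step of that detour is also stated with a justification that is not valid in general: the claim that tensoring a $V$-flat module with the structure sheaf of a closed subscheme pulled back from the Artinian factor ``does not disturb $V$-flatness'' fails for arbitrary $V$-flat sheaves (for instance, for $A=k[x]/(x^2)$, $B=k[t]$, the ideal $M=(x,t)\subset A\otimes_k B$ is $B$-flat, yet $M/xM$ has $t$-torsion). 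What rescues your argument is precisely that $F_d$ is locally free on $D_d$ --- but that fact already gives the corollary directly, so the intermediate flatness discussion should either be dropped or be justified by local freeness rather than by the general principle you invoke.
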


\begin{remark}\label{comm-alg-rem}
We will use the following fact in the proof 
of the next theorem. Let $A$ and $B$ be rings and 
let $M$ be an $A\otimes_kB$ module. Let $B\to C$ be a 
ring homomorphism. Then 
$$M\otimes_{A\otimes_kB}(A\otimes_kC)\cong M\otimes_{A\otimes_kB}(A\otimes_kB)\otimes_BC\cong M\otimes_BC\,.$$ 
In particular, if $0\to N'\to N\to M\to 0$ 
is a short exact sequence of $A\otimes_kB$ modules,
where $M$ is flat as a $B$-module, then it remains exact 
when we apply the functor $-\otimes_{A\otimes_kB}(A\otimes_kC)$.
\end{remark}

\begin{proposition}\label{birational}
The restriction $g_d:g_d^{-1}(V)\to V$ is an isomorphism.
\end{proposition}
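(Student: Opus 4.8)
The plan is to show that $g_d\colon g_d^{-1}(V)\to V$ is an isomorphism by producing an explicit inverse morphism, using the sheaves $A_j$ and $F_j$ constructed above. We already know from Lemma~\ref{fibre-l2}(ii) that $g_d$ restricts to a bijection on closed points over $V$, so the real content is to upgrade this to a scheme-theoretic isomorphism. The natural strategy is to recall that a closed point of $S_d$ over $d[c]$ is a filtration $E_d\subset E_{d-1}\subset\cdots\subset E_0=E$ with each successive quotient a length-one skyscraper at $c$, and over $V$ the quotient $E\to E/E_d$ has $E/E_d\cong\mc O_C/\mf m^d_{C,c}$; in that case the intermediate $E_j$ are \emph{uniquely} determined as the preimages of $\mf m^j_{C,c}/\mf m^d_{C,c}\subset\mc O_C/\mf m^d_{C,c}$. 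This canonical construction is exactly what must be made to work in families over $V$.

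Concretely, I would work on $C\times V$ with the line bundle $F_d$ on $D_d$ from Lemma~\ref{fibre-l4} and its restrictions $F_j$ on $D_j$ from the Corollary. Using the filtration of $\mc O_C/\mf m^d_{C,c}$ by the $\mf m^j$, one gets surjections $F_d\to F_j$ on $D_d$ (pushed forward, surjections $(\alpha_d)_*F_d\to(\alpha_j)_*F_j$ on $C\times V$), hence a tower of quotients
\[
p_1^*E\vert_{C\times V}\cong (\alpha_d)_*F_d\text{'s kernel extension}\longrightarrow (\alpha_{d-1})_*F_{d-1}\longrightarrow\cdots\longrightarrow(\alpha_1)_*F_1\,,
\]
more precisely a chain $p_1^*E\to B'\vert_{C\times V}=(\alpha_d)_*F_d$ composed with $(\alpha_d)_*F_d\to(\alpha_j)_*F_j$. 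Each $(\alpha_j)_*F_j$ is flat over $V$ with fibers a length-$j$ torsion sheaf at $c$, so by the universal property of $\mc Q_{j[c]}$ (applied inductively, exactly as in the construction of $S_d$ as an iterated projectivization) this data defines a morphism $V\to S_d$. One then checks it lands in $g_d^{-1}(V)$ and is a two-sided inverse to $g_d\vert_{g_d^{-1}(V)}$: on closed points this is the uniqueness statement in Lemma~\ref{fibre-l2}(ii), and at the level of schemes it follows because both compositions are determined by the same universal quotient data. The commutative-algebra observation in Remark~\ref{comm-alg-rem} is what lets one restrict these quotients along $V\times_{S_d}(\text{point})$-type base changes and along $D_j\hookrightarrow C\times V$ without losing exactness, since all the relevant quotients are $V$-flat.

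The main obstacle I expect is the inductive identification of $S_d$ with this tower: one must verify that the kernel $A_j\vert$ of the map to $(i_j)_*\mc O_j(1)$, pulled back appropriately, is compatible with the $F_j$ under $g_d$, i.e.\ that $g_d^*$ of the universal quotient sequence \eqref{univ-seq-A_j} matches the $F_{j-1}\hookleftarrow F_j$ picture on $D_j$ over $V$. This amounts to comparing the universal line bundle $\mc O_j(1)$ on $S_j$ with the line bundle $\ker(F_j\to F_{j-1})$ on $D_j$ after pulling everything back to (the relevant open of) $S_d$; the key point is that over $g_d^{-1}(V)$ the relevant $i_j^*A_{j-1}$ is already a line bundle on $\{c\}\times g_d^{-1}(V)$ (because the fiberwise torsion has length one at each stage), so its projectivization is trivial there and $\mc O_j(1)$ is canonically that line bundle, matching $F_{j-1}/F_j$. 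Once this matching is set up, the fact that $g_d\vert_{g_d^{-1}(V)}$ and the constructed $V\to S_d$ are mutually inverse is a formal consequence of the universal properties, and the bijectivity on points already recorded in Lemma~\ref{fibre-l2} guarantees there is no room for a nontrivial automorphism to sneak in.
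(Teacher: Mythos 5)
Your overall plan coincides with the paper's: build an explicit inverse $h_d\colon V\to S_d$ inductively, at each stage feeding a line bundle quotient extracted from the flat family $F_j$ into the universal property of the projective bundle $S_j=\mb P(i_{j-1}^*A_{j-1})$. (Concretely, the paper sets $A_j'=\ker\bigl(p_1^*E\to(\alpha_j)_*F_j\bigr)$, notes the exact sequences $0\to A_j'\to A_{j-1}'\to(\alpha_1)_*F_1\to 0$, and restricts these to $\{c\}\times V$ to produce the required quotients $(\alpha_1)^*A_{j-1}'\to F_1$; Remark \ref{comm-alg-rem} is what identifies $h_{j-1}^*i_{j-1}^*A_{j-1}$ with $(\alpha_1)^*A_{j-1}'$.) However, two of your justifications are wrong as stated. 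First, your ``key point'' that over $g_d^{-1}(V)$ the sheaf $i_{j-1}^*A_{j-1}$ ``is already a line bundle\ldots so its projectivization is trivial there'' is false: $A_{j-1}$ is locally free of rank $r\geq 2$ everywhere, so $S_j\to S_{j-1}$ is a genuine $\mb P^{r-1}$-bundle over all of $S_{j-1}$, including over $g_d^{-1}(V)$. What makes the fibers of $g_d$ over $V$ single points is not triviality of these bundles but the fact that the section is forced: the quotient of $(\alpha_1)^*A'_{j-1}$ by the image of $(\alpha_1)^*A'_{j}$ is the canonical line bundle $F_1\cong\ker(F_j\to F_{j-1})$, and this is the datum defining $h_j$.

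Second, the endgame. Saying that both composites are the identity ``because both compositions are determined by the same universal quotient data,'' with bijectivity on closed points ``guaranteeing there is no room for a nontrivial automorphism,'' does not close the argument: a morphism that is bijective on points and admits a one-sided inverse need not be an isomorphism of schemes (the inclusion of the reduction of a non-reduced scheme is the standard counterexample, and at this stage $g_d^{-1}(V)$ has not yet been shown to be reduced). The paper verifies only the easy composite $g_d\circ h_d=\mathrm{id}_V$ by matching universal quotients, and then finishes with a local-ring argument: by Lemma \ref{GS-L3} the map $\phi$ is smooth along $V$, so $\mc O_{V,v}$ is a domain of dimension $d(r-1)$, while $\mc O_{S_d,h_d(v)}$ is a domain of the same dimension; the split surjection $\mc O_{S_d,h_d(v)}\to\mc O_{V,v}$ therefore has zero kernel, so $h_d$ induces isomorphisms on all local rings. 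You would need either this argument or an honest direct verification that $h_d\circ g_d=\mathrm{id}$ on $g_d^{-1}(V)$, which is more delicate than a formal appeal to universal properties.
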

\begin{proof}
We will use induction on $j$ to define maps $V\to S_j$. 
Define $A'_0$ on $C\times V$ to be $p_1^*E$. 
For $j\geq 1$ define 
sheaves $A_j'$ on $C\times V$ as follows
\begin{equation}\label{e5}
0\to A_j'\to p_1^*E\to (\alpha_j)_*(F_j)\to 0\,. 
\end{equation}
Observe that we have a commutative diagram
\[
 \xymatrix{
	0\ar[r] & A_{j}'\ar[r]\ar[d] & p_1^*E \ar[r]\ar@{=}[d] & (\alpha_{j})_*(F_{j})\ar[r]\ar[d] & 0\\
	0\ar[r] & A_{j-1}'\ar[r] & p_1^*E \ar[r] & (\alpha_{j-1})_*(F_{j-1})\ar[r] & 0
 }
\]
The kernel of the right vertical arrow is $(\alpha_1)_*(F_1)$.
Thus, there is an exact sequence of sheaves on 
$C\times V$, 
\begin{equation}\label{exact-1-V}
0\to A_{j}'\to A_{j-1}'\xrightarrow{\delta_{j-1}} (\alpha_1)_*(F_1)\to 0\,. 
\end{equation}

Note that 
$S_1=\mb P(E_c)$. Thus, to give a map from $V\to S_1$ 
we need to give a line bundle quotient of $E_c\otimes O_V$.
Restricting the universal quotient $p_1^*E\to (\alpha_d)_*(F_d)$ 
on $C\times V$ to $c\times V$ we get the quotient
$E_c\otimes \mc O_V\to F_1\,.$
This defines a morphism $h_1:V\to S_1$.
On $C\times S_1$ one has the exact sequence 
\eqref{univ-seq-A_j}.
Using remark \ref{comm-alg-rem} we see that the pullback of this along 
$id_C\times h_1$ gives the following exact sequence
on $C\times V$,
\begin{equation*}
0\to (id_C\times h_1)^*A_1\to p_1^*E\xrightarrow{\delta_0} (\alpha_1)_*(F_1)\to 0\,. 
\end{equation*}
We see that $A_1'=(id_C\times h_1)^*A_1$.
Let us assume that we have constructed maps $h_{j-1}:V\to S_{j-1}$ such that 
the pullback of \eqref{univ-seq-A_j} along $id_C\times h_{j-1}$ yields the 
exact sequence 
\begin{equation}\label{e1}
0\to A_{j-1}'\to A_{j-2}'\xrightarrow{\delta_{j-2}} (\alpha_1)_*(F_1)\to 0\,. 
\end{equation}
Consider the diagram 
\[
\begin{tikzcd}
\{c\}\times V \arrow[r,hook,"\alpha_1"] \arrow[rd, "="]& C\times V \arrow[r,"id_C\times h_{j-1}"] 
		\arrow[d] & C\times S_{j-1} \arrow[d] \\
	& V \arrow[r,"h_{j-1}"] & S_{j-1} \arrow[u, bend right=90, "i_{j-1}", labels=below right]
\end{tikzcd}
\]
To give a map $V\to S_{j}$ we need to give a line bundle 
quotient of 
$$h_{j-1}^*i_{j-1}^*A_{j-1}\cong (\alpha_1)^*(id_C\times h_{j-1})^*A_{j-1}\cong (\alpha_1)^*A'_{j-1},$$
where the last isomorphism follows from \eqref{e1}.
Restricting \eqref{exact-1-V} to $c\times V$, 
we get a line bundle quotient 
$$(\alpha_1)^*A'_{j-1}\to F_1\,.$$
This defines a morphism $h_j:V\to S_j$. Pulling back 
\eqref{univ-seq-A_j} along $id_C\times h_j$, using \ref{comm-alg-rem}
and \eqref{e1}, 
we get the following short exact sequence on $C\times V$ 
$$0\to (id_C\times h_j)^*A_j\to A'_{j-1}\xrightarrow{\delta_{j-1}} (\alpha_1)_*(F_1)\to 0\,.$$
Now equation \eqref{exact-1-V} shows that $A'_j\cong (id_C\times h_j)^*A_j$
and there is an exact sequence
\begin{equation*}
0\to A_{j}'\to A_{j-1}'\xrightarrow{\delta_{j-1}} (\alpha_1)_*(F_1)\to 0\,. 
\end{equation*}
Thus, inductively we have constructed a map $h_d:V\to S_d$.

To show that the composite $V\xrightarrow{h_d}S_d\xrightarrow{g_d} \mc Q_{d[c]}$ 
is an isomorphism onto $V$, it suffices to show that the pullback 
of the universal quotient on $C\times \mc Q_{d[c]}$ along $id_C\times (g_d\circ h_d)$
is the restriction of the universal quotient to $C\times V$. 
Recall from \eqref{e5} the universal quotient on $C\times V$
is 
$$0\to A'_d\to p_1^*E\to B'\to 0\,.$$
Pulling this back along $id_C\times g_d$ is the quotient (recall from \eqref{e2})
$$0\to A_d\to p_1^*E\to B^d_d\to 0,$$
by the definition of the map $g_d$. From the definition of $h_d$, 
one easily checks that the pullback 
along $id_C\times h_d$ of the filtration in \eqref{e10} is the 
following filtration on $C\times V$,
\[
A'_d\subset A'_{d-1}\subset\cdots\subset p_1^*E\,.
\]
Thus, it follows that the pullback along $id_C\times h_d$ of 
\eqref{e2} is
$$0\to A'_d\to p_1^*E\to B'\to 0\, ,$$
which is the universal quotient on $C\times V$. This proves 
that $g_d\circ h_d$ is the identity on $V$.
By lemma \ref{GS-L3} the morphism $\phi$ is smooth 
at a point $v\in V$. This shows that the local ring 
$\mc O_{V,v}$ is a domain. Thus, we have maps 
$\mc O_{V,v}\to \mc O_{S_d,h_d(v)}\to \mc O_{V,v}$ 
such that the composite is the identity. Since both rings 
have the same dimension, the kernel of $\mc O_{S_d,h_d(v)}\to \mc O_{V,v}$
is forced to be 0, which shows that the local rings 
are isomorphic. This proves the proposition.
\end{proof}

\section{Normality of all fibers}
For a point $D=\sum d_i[c_i]\in S^dC$, denote by $\mc Q_{D}$ 
the scheme theoretic fiber of $\phi$ over the closed point corresponding 
to $D$.
\begin{proposition}\label{fibres are of same dimension}
The fiber $\mc Q_{D}$ is irreducible of dimension $d(r-1)$.
\end{proposition}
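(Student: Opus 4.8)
The plan is to reduce this to the case $D = d[c]$ of a single point already treated in the previous section. Write $D = \sum_{i=1}^{k} d_i[c_i]$ with the $c_i$ pairwise distinct, and let $\mc Q_{d_i[c_i]}$ denote the fiber over $d_i[c_i]$ of the Hilbert--Chow map on the Quot scheme of length-$d_i$ torsion quotients of $E$. I would prove that there is an isomorphism
\[
\mc Q_D \;\xrightarrow{\ \sim\ }\; \prod_{i=1}^{k} \mc Q_{d_i[c_i]}.
\]
Granting this, each factor is irreducible of dimension $d_i(r-1)$ by running the construction of Section \ref{S_d} and Lemma \ref{fibre-l2} with $d$ replaced by $d_i$ (for $d_i=1$ this degenerates to the isomorphism $S_1 = \mathbb{P}(E_{c_i}) \xrightarrow{\sim} \mc Q_{1[c_i]}$), and a product of irreducible $k$-schemes over the algebraically closed field $k$ is again irreducible of dimension the sum of the dimensions. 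Hence $\mc Q_D$ is irreducible of dimension $\sum_i d_i(r-1) = d(r-1)$.

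To construct the map $\mc Q_D \to \prod_i \mc Q_{d_i[c_i]}$, I would repeat the argument of Lemma \ref{fibre-l3}: since the composite $\mc Q_D \hookrightarrow \mc Q \xrightarrow{\phi} S^dC$ is the constant map to $D$, the ideal sheaf attached to the universal inclusion $A' \hookrightarrow p_1^*E$ on $C \times \mc Q_D$ is $p_1^*\mc O_C(-D)$, so $p_1^*(E \otimes \mc O_C(-D)) \subset A'$ and the universal quotient $B'$ is a quotient of $p_1^*\big(E \otimes (\mc O_C/\mc O_C(-D))\big)$. As $\mc O_C/\mc O_C(-D) \cong \bigoplus_{i=1}^{k} \mc O_C/\mf m_{C,c_i}^{d_i}$, the sheaf $B'$ is supported on the disjoint union $\bigsqcup_i {\rm Spec}(\mc O_C/\mf m_{C,c_i}^{d_i}) \times \mc Q_D$ and therefore splits canonically as $B' = \bigoplus_i B_i'$, with each $B_i'$ flat over $\mc Q_D$ (a direct summand of a flat sheaf) and fiberwise a length-$d_i$ torsion quotient of $E$ supported at $c_i$. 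Each quotient $p_1^*E \to B_i'$ classifies a morphism $\mc Q_D \to \mc Q_{d_i[c_i]}$, and together these give the desired map.

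For the inverse, given a $k$-scheme $T$ and quotients $p_1^*E \to \mc C_i$ on $C \times T$ classified by maps $T \to \mc Q_{d_i[c_i]}$, I would form $p_1^*E \to \bigoplus_i \mc C_i$; since the $\mc C_i$ have disjoint supports, taking determinants of kernels and using that each $T \to \mc Q_{d_i[c_i]}$ has Hilbert--Chow image the point $d_i[c_i]$ shows that the determinant quotient of $p_1^*E \to \bigoplus_i \mc C_i$ is the pullback of $\mc O_C \to \mc O_C/\mc O_C(-D)$, so the classifying map $T \to S^dC$ factors through $\{D\}$ and we obtain $T \to \mc Q_D$. These two constructions are mutually inverse because the decomposition of a torsion sheaf into the parts supported at distinct points is canonical, so they are inverse natural transformations between the functors of points. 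The main obstacle I anticipate is the bookkeeping in this last determinant computation --- in particular verifying that the recombined quotient has Hilbert--Chow image \emph{scheme-theoretically} equal to $D$, not merely set-theoretically --- together with the minor nuisance of the $d_i = 1$ boundary case; the irreducibility of the product over $k = \bar k$ is standard and the dimension count is immediate once the isomorphism is in hand.
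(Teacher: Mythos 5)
Your proposal is correct in outline but takes a genuinely different, and harder, route than the paper. The paper's proof of this proposition is deliberately soft: it only constructs the morphism $\theta_D:\prod_i \mc Q_{d_i[c_i]}\to\mc Q$ from the quotient $p^*E\to\bigoplus_i p_i^*B_{d_i[c_i]}$, observes that it factors through $\mc Q_D$ and is bijective on closed points, and then gets irreducibility from irreducibility of the product and the dimension from the sandwich $d(r-1)\le\dim\mc Q_D\le\sum_i\dim\mc Q_{d_i[c_i]}=d(r-1)$ (the lower bound coming from semicontinuity of fiber dimension). The scheme-theoretic isomorphism $\mc Q_D\cong\prod_i\mc Q_{d_i[c_i]}$ that you make the centerpiece is proved in the paper only two results later, \emph{after} this proposition has been used to show that $\phi$ is flat and that the fibers are Cohen--Macaulay; at that stage the two-sided comparison of local rings (as in the last paragraph of Proposition \ref{birational}) closes the argument cheaply.

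The one step you must genuinely supply, and which you only ``anticipate,'' is the verification that the component quotients have Hilbert--Chow image \emph{scheme-theoretically} equal to a point --- and note that this issue arises in the forward direction $\mc Q_D\to\prod_i\mc Q_{d_i[c_i]}$ just as much as in the reverse one: a priori $p_1^*E\to B_i'$ only classifies a map to the full Quot scheme of length-$d_i$ quotients, and you must show the induced map to $S^{d_i}C$ factors through the closed point $\{d_i[c_i]\}$, not merely through its underlying set. You cannot appeal to reducedness of $\mc Q_D$ to upgrade set-theoretic to scheme-theoretic factorization, because reducedness is only established downstream of this proposition (via flatness of $\phi$, which uses the equidimensionality you are trying to prove); that would be circular. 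The step is fillable by honest determinant bookkeeping: the ideal sheaf of $\det(\ker(p_1^*E\to B'))$ in $\det p_1^*E$ is $p_1^*\mc O_C(-D)=\prod_i p_1^*\mc O_C(-d_ic_i)$, it factors as the product of the pairwise comaximal ideal sheaves attached to the $B_i'$, and since the $i$-th factor is cosupported in an infinitesimal neighborhood of $\{c_i\}\times\mc Q_D$ it must equal $p_1^*\mc O_C(-d_ic_i)$; a symmetric argument handles the reverse direction. With that lemma in place your functor-of-points argument does give the product decomposition directly and the proposition follows; without it, the proof as written has a gap exactly where you flagged it. Either complete that computation, or retreat to the paper's weaker statement (surjectivity on closed points plus the dimension sandwich), which suffices for this proposition and avoids the issue entirely.
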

\begin{proof}
We define a morphism $\prod \mc Q_{d_i[c_i]}\to \mc Q_D$ as follows.
Let $p_{j}$ be the projections $C\times \prod \mc Q_{d_i[c_i]} \to C\times \mc Q_{d_j[c_j]}$
and $p$ be the projection $C\times  \prod \mc Q_{d_i[c_i]} \to C$.
Let $B_{d_i[c_i]}$ denote the universal quotient over $C\times \mc Q_{d_i[c_i]}$.
Then over $C\times \prod \mc Q_{d_i[c_i]}$, we define a quotient
$$p^*E\to \bigoplus p^*_{i}B_{d_i[c_i]}$$
Clearly, $\bigoplus p^*_iB_{d_i[c_i]}$ is flat, and hence induces a morphism 
\begin{equation}\label{theta_D}
\theta_D:\prod \mc Q_{d_i[c_i]}\to \mc Q 
\end{equation}
which is bijective onto the closed points of $\mc Q_D$.
Therefore, $\mc Q_D$ is irreducible.
Since the dimension of the general fiber of $\phi$ is $d(r-1)$, we get 
$$d(r-1)\leq {\rm dim}\,\mc Q_D\leq\sum {\rm dim }\,\mc Q_{d_i[c_i]}=\sum d_i(r-1)=d(r-1)$$
This proves the corollary.
\end{proof}

\begin{corollary}
The map $\phi$ is flat. 
\end{corollary}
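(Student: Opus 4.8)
The plan is to invoke the ``miracle flatness'' criterion: if $f\colon X\to Y$ is a morphism of finite-type $k$-schemes with $Y$ regular, $X$ Cohen--Macaulay, and $\dim\mc O_{X,x}=\dim\mc O_{Y,f(x)}+\dim_x X_{f(x)}$ for every closed point $x\in X$, then $f$ is flat (the flat locus being open, it is then flat everywhere). So it would suffice to check these three conditions for $\phi\colon\mc Q\to S^dC$.

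The regularity of $Y=S^dC$ holds because the symmetric product of a smooth curve is a smooth projective variety, of dimension $d$. The Cohen--Macaulay hypothesis on $X=\mc Q$ holds because $\mc Q$ is smooth over $k$. The remaining point is the dimension equality, for which I would first identify $\dim\mc Q$. By Proposition \ref{Generic Smoothness}, each fiber of $\phi$ contains a point $q_0$ at which $\phi$ is smooth; since every fiber has dimension $d(r-1)$ by Proposition \ref{fibres are of same dimension}, the relative dimension of $\phi$ at $q_0$ is $d(r-1)$, so $\dim_{q_0}\mc Q=\dim S^dC+d(r-1)=d+d(r-1)=dr$. Since $\mc Q$ is a smooth variety it is irreducible, hence $\dim\mc O_{\mc Q,x}=dr$ for every closed point $x$; alternatively, since each fiber $\mc Q_D$ is irreducible, hence connected, every closed point of $\mc Q$ lies in the same connected (and, by smoothness, irreducible) component as such a point $q_0$, which again gives $\dim_x\mc Q=dr$.

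With these in hand, I would fix a closed point $x\in\mc Q$, set $D=\phi(x)$, and note that $\dim\mc O_{\mc Q,x}=dr$, $\dim\mc O_{S^dC,D}=d$, and, because $\mc Q_D$ is irreducible of dimension $d(r-1)$, its local dimension at $x$ is $\dim_x\mc Q_D=d(r-1)$. Since $dr=d+d(r-1)$, the criterion applies and $\phi$ is flat at $x$; as $x$ was an arbitrary closed point, $\phi$ is flat.

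The real content here is not the flatness deduction, which is bookkeeping, but the two inputs it rests on: the irreducibility and exact dimension $d(r-1)$ of every fiber $\mc Q_D$, and the existence of a smooth point of $\phi$ in each fiber --- both already established above. The one subtlety worth flagging is that one needs the \emph{local} dimension of $\mc Q_D$ at $x$ to equal $d(r-1)$, which is precisely what irreducibility of $\mc Q_D$ supplies; without that, the dimension could drop at special points and the criterion would fail.
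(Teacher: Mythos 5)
Your proof is correct and is essentially the paper's argument: the paper simply cites \cite[Chapter III, Exercise 10.9]{Ha}, which is exactly the miracle-flatness criterion you invoke ($S^dC$ regular, $\mc Q$ Cohen--Macaulay, all fibers of dimension $d(r-1)=\dim\mc Q-\dim S^dC$), with the fiber-dimension input coming from Proposition \ref{fibres are of same dimension}. You have merely written out the verification that the paper leaves implicit.
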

\begin{proof}
This follows using \cite[Chapter III, Exercise 10.9]{Ha}
\end{proof}

\begin{corollary}
The fiber $\mc Q_{d[c]}$ is reduced, irreducible and normal.
In particular, it is integral.
\end{corollary}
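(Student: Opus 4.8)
The plan is to deduce the statement from Serre's criterion for normality, using the structural facts already in hand. The scheme $\mc Q_{d[c]}$ is irreducible by Proposition \ref{fibres are of same dimension}, and it satisfies Serre's condition $(R_1)$ by Corollary \ref{fiber-R1}. Hence the only thing left to check is condition $(S_2)$; granting that, Serre's criterion yields that $\mc Q_{d[c]}$ is normal, a normal scheme is reduced, and a reduced irreducible scheme is integral, so all three assertions (reduced, irreducible, normal, and in particular integral) follow at once.

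To obtain $(S_2)$ I would exploit that $\mc Q_{d[c]}$ is a scheme-theoretic fiber of the flat morphism $\phi:\mc Q\to S^dC$, flatness of $\phi$ having just been established. Here $\mc Q$ is a smooth projective variety, hence Cohen-Macaulay, and $S^dC$ is smooth, hence regular. For a flat local homomorphism $A\to B$ of Noetherian local rings, $B$ is Cohen-Macaulay if and only if $A$ and the closed fibre $B/\mf{m}_A B$ are both Cohen-Macaulay, since in that situation dimension and depth are additive in the obvious sense. Applying this with $B=\mc O_{\mc Q,q}$ and $A=\mc O_{S^dC,\phi(q)}$ for a point $q\in\mc Q_{d[c]}$, and using that $A$ is regular hence Cohen-Macaulay, we conclude that every local ring of $\mc Q_{d[c]}$ is Cohen-Macaulay. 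Thus $\mc Q_{d[c]}$ is a Cohen-Macaulay scheme, and in particular it satisfies $(S_k)$ for every $k$, so certainly $(S_2)$.

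I do not anticipate a real obstacle: all the substantive work — the surjection $\theta_D$ of \eqref{theta_D} (and behind it the birational model $S_d$) giving irreducibility, the estimate that $\mc Q_{d[c]}\setminus V$ has codimension at least $2$ giving $(R_1)$, and the flatness of $\phi$ — is already done, and the present corollary is a formal consequence. The one point deserving a word of care is that the Cohen-Macaulay fibre statement is being invoked for a scheme not yet known to be reduced; but this is harmless, since the flat local criterion for Cohen-Macaulayness has no reducedness hypothesis, and it is precisely this argument that \emph{produces} the reducedness.
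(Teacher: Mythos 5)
Your proof is correct and follows essentially the same route as the paper: flatness of $\phi$ together with smoothness of $\mc Q$ gives that the fiber is Cohen--Macaulay (the paper cites the Stacks Project / Matsumura for the fact you derive from the depth-additivity criterion), hence $(S_2)$, and combining with the already-established $(R_1)$ and irreducibility yields reducedness and normality via Serre's criterion. The only cosmetic difference is that the paper deduces reducedness from $(R_0)+(S_1)$ before invoking $(R_1)+(S_2)$ for normality, whereas you get reducedness directly from normality; both are valid.
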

\begin{proof}
Since $\phi$ is flat and $\mc Q$ is smooth, 
it follows from \cite[\href{https://stacks.math.columbia.edu/tag/045J}{Tag 045J}]{Stk}
(or see Corollary to \cite[Theorem 23.3]{Mat})
that the fiber $\mc Q_{d[c]}$ is Cohen-Macaulay. Thus, the fiber 
satisfies Serre's condition $(S_2)$. Now from corollary \ref{fiber-R1}
it follows that the fiber satisfies $(R_0)$ and $(S_1)$ and so it 
is reduced. Since it satisfies $(R_1)$ and $(S_2)$ it is normal.
\end{proof}

\begin{lemma}
$\mc Q_{D}\cong\prod \mc Q_{d_i[c_i]}$.
\end{lemma}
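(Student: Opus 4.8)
The plan is to upgrade the bijective morphism $\theta_D:\prod \mc Q_{d_i[c_i]}\to \mc Q_D$ of Proposition \ref{fibres are of same dimension} to an isomorphism. Since both sides are now known to be irreducible of the same dimension $d(r-1)$, and we have just shown that $\mc Q_{d[c]}$ (hence, by the same argument applied at each point $c_i$, each factor $\mc Q_{d_i[c_i]}$, and therefore the product) is integral and normal, it suffices to show that $\theta_D$ is birational: a birational morphism onto a normal variety which is bijective on closed points is an isomorphism, by Zariski's main theorem. So the heart of the argument is to exhibit an open dense subset over which $\theta_D$ admits an inverse.

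First I would let $V_i\subset \mc Q_{d_i[c_i]}$ be the open locus of Proposition \ref{birational} — points parameterizing quotients $E\to \mc O_C/\mf m^{d_i}_{C,c_i}$ — and set $W:=\prod V_i$, an open dense subset of the source. On the target side, let $V_D\subset \mc Q_D$ be the locus of quotients $E\to \bigoplus_i \mc O_C/\mf m^{d_i}_{C,c_i}$; this is open and dense in $\mc Q_D$ (its complement is contained in the image of the positive-codimension locus, by the same rank count as in Lemma \ref{fibre-l2}(iii) applied at each $c_i$), and $\theta_D$ maps $W$ bijectively onto $V_D$. Now I would construct the inverse $V_D\to W$ directly. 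Given the universal quotient $p_1^*E\to B'$ on $C\times V_D$, the Chinese remainder / local decomposition of a torsion sheaf supported at the distinct points $c_i$ splits $B'$ canonically as $\bigoplus_i B'_i$, where $B'_i$ is supported on $\{c_i\}\times V_D$ and is, by the argument of Lemma \ref{fibre-l3} and Lemma \ref{fibre-l4} applied at $c_i$, of the form $(\alpha_{d_i,i})_*(\text{line bundle on }{\rm Spec}(\mc O_{C,c_i}/\mf m^{d_i}_{C,c_i})\times V_D)$. Each composite $p_1^*E\to B'\to B'_i$ is then an $S_d$-flat family of torsion quotients of degree $d_i$ supported at $c_i$, hence by the universal property of $\mc Q$ defines a morphism $V_D\to \mc Q_{d_i[c_i]}$ landing in $V_i$; together these give $V_D\to W$, and chasing the universal quotients shows it is inverse to $\theta_D|_W$.

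With $\theta_D$ birational and bijective onto the normal irreducible variety $\mc Q_D$, Zariski's main theorem gives that $\theta_D$ is an isomorphism, which is the assertion. The main obstacle I expect is making the canonical splitting $B'=\bigoplus_i B'_i$ over the base $V_D$ (as opposed to over a single point) genuinely functorial and compatible with flatness: one must check that idempotents in the structure sheaf of the finite scheme ${\rm Spec}(\mc O_{C,c}/\mathcal{I})$ separating the points $c_i$ lift to give a direct sum decomposition of $B'$ as a sheaf on $C\times V_D$, and that each summand is flat of the expected degree — this is the relative version of the primary decomposition, and Remark \ref{comm-alg-rem} together with Nakayama's lemma handles the flatness bookkeeping. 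A secondary point is verifying density of $V_D$ in $\mc Q_D$; this is routine given the dimension count already established, but should be stated carefully since $\mc Q_D$ could a priori have embedded behaviour — which, however, is ruled out by the normality just proved.
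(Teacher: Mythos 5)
Your decomposition of the universal quotient according to the (disjoint) supports at the distinct points $c_i$ is exactly the right idea, and it is the one the paper uses. But the concluding step has a genuine gap: the form of Zariski's main theorem you invoke (finite birational onto a \emph{normal} variety is an isomorphism) requires normality of the \emph{target} $\mc Q_D$, and at this stage that is precisely what is not known --- Corollary \ref{cor-fiber} ($\mc Q_D$ reduced, irreducible and normal) is \emph{deduced from} this lemma. What you actually establish is normality of the \emph{source} $\prod \mc Q_{d_i[c_i]}$, which is not enough: the normalization of a cuspidal curve is finite, birational, bijective on closed points, has smooth source, and is not an isomorphism. When you later write ``bijective onto the normal irreducible variety $\mc Q_D$'' and ``ruled out by the normality just proved,'' you are assuming the conclusion.

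There are two ways to repair this. (a) Prove normality of $\mc Q_D$ directly, as was done for $\mc Q_{d[c]}$: flatness of $\phi$ and smoothness of $\mc Q$ give Cohen--Macaulayness, hence $(S_2)$, for every fiber; then $(R_1)$ requires smoothness of $\phi$ along $V_D$ (Lemma \ref{GS-L3} applies, since points of $V_D$ are cyclic quotients $E\to \mc O_D$) together with the estimate that $\mc Q_D\setminus V_D$ has codimension at least $2$, which you would have to extract from the finiteness of $\theta_D$ and the corresponding bounds on each factor. This works but is extra labor. (b) Better, and this is the paper's route: the splitting $B_D=\bigoplus_i B_{d_i[c_i]}$ is automatic over \emph{all} of $C\times \mc Q_D$, not just over $V_D$, because $B_D$ is supported on a disjoint union of infinitesimal neighbourhoods of the distinct $c_i$; no Chinese-remainder delicacy arises and no restriction to the cyclic locus is needed. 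This yields a morphism $\gamma_D:\mc Q_D\to\prod\mc Q_{d_i[c_i]}$ defined everywhere with $\tilde\theta_D\circ\gamma_D={\rm id}_{\mc Q_D}$, and then the resulting split surjection from the local ring of the integral source onto the local ring of $\mc Q_D$ at the corresponding point, both of the same dimension, must be injective; so all local rings agree and no normality of $\mc Q_D$ is needed. Your decision to construct the inverse only over $V_D$ is what forces you into the ZMT argument and hence into the circularity.
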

\begin{proof}
The map $\theta_D$ in \eqref{theta_D} sits in 
a commutative diagram 
\[
 \xymatrix{
	\prod\mc Q_{d_i[c_i]}\ar[r]^{\theta_D}\ar[d] & \mc Q\ar[d]\\
	\prod S^{d_i}C\ar[r] & S^dC
	}
\]
From the above diagram it is clear that $\theta_D$ factors to give a map 
$$\tilde\theta_D:\prod\mc Q_{d_i[c_i]}\to \mc Q_D\,.$$
We want to give a map in the other direction.
Let $p_D:C\times \mc Q_D \to C$ be the first projection.
Let us denote the restriction of the universal quotient to $C\times \mc Q_D$ by
$$p^*_D E \to B_D\,.$$
There are integers $e_i$ such that the quotient $B_D$ is supported 
on the following closed subscheme of $C\times \mc Q_D$
$$\bigsqcup_i\,{\rm Spec}\, (\mc O_C/\mf{m}_{C,c_i}^{e_i})\times \mc Q_D\,.$$
Let $\mf{j}_i:{\rm Spec}\, (\mc O_C/\mf{m}_{C,c_i}^{e_i})\times \mc Q_D\hookrightarrow C\times \mc Q_D$
denote the closed immersion.
Let 
$$B_{d_i[c_i]}:=\mf{j}_{i*}\Big(B_{D}\vert_{{\rm Spec}\, (\mc O_C/\mf{m}_{C,c_i}^{e_i})\times \mc Q_D}\Big)\,.$$
Clearly, since $B_D$ is flat over $\mc Q_D$, the $B_{d_i[c_i]}$ is also 
flat over $\mc Q_D$.
We define the quotients 
$$p^*E\to B_D \to B_{d_i[c_i]}\,,$$
which defines a morphism 
$\mc Q_D\to \mc Q_{d_i[c_i]}$. This defines a morphism 
$\gamma_D:\mc Q_D\to \prod \mc Q_{d_i[c_i]}$. One easily checks 
that the pullback along $id_C\times (\theta_D\circ \gamma_D)$ of the universal quotient 
is $p_1^*E\to B_D$. This shows that $\tilde \theta_D\circ \gamma_D$
is the identity. Arguing as in the last para of the proof of 
proposition \ref{birational}, the lemma is proved.
\end{proof}

\begin{corollary}\label{cor-fiber}
The fiber $\mc Q_D$ is reduced, irreducible and normal.
\end{corollary}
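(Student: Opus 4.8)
The plan is to deduce Corollary~\ref{cor-fiber} immediately from the preceding lemma together with the case already settled for a single thickened point. Indeed, the lemma gives an isomorphism $\mc Q_D\cong\prod_i\mc Q_{d_i[c_i]}$, so it suffices to know that each factor $\mc Q_{d_i[c_i]}$ is reduced, irreducible and normal, and that these properties are stable under taking a finite product of schemes of finite type over the algebraically closed field $k$. The statement for each factor is exactly the content of the corollary proved just after the ``$\phi$ is flat'' corollary (the one deducing Cohen--Macaulayness from flatness of $\phi$ and smoothness of $\mc Q$, then combining $(R_1)$ from Corollary~\ref{fiber-R1} with $(S_2)$). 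So the only genuine work is the permanence of these properties under products.

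First I would recall that irreducibility is preserved: a product $X\times_k Y$ of two irreducible $k$-schemes of finite type is irreducible when $k$ is algebraically closed (this is standard; it follows, for instance, from the fact that over an algebraically closed field the product of integral schemes is integral). Alternatively, irreducibility of $\mc Q_D$ has already been established independently in Proposition~\ref{fibres are of same dimension} via the bijective morphism $\theta_D$, so one may simply cite that. Next, normality of a finite product of normal $k$-schemes (with $k$ perfect, in particular algebraically closed) is again standard --- the local rings of the product are obtained from those of the factors by operations preserving normality, or one can invoke that normal plus geometrically integral is preserved under base change and products over a perfect field. Finally, reducedness is automatic once we have normality (normal schemes are reduced), or again follows from reducedness of each factor over a perfect field.

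Concretely, the steps in order would be: (1) invoke the lemma to replace $\mc Q_D$ by $\prod_i\mc Q_{d_i[c_i]}$; (2) invoke the earlier corollary that each $\mc Q_{d_i[c_i]}$ is reduced, irreducible and normal (equivalently integral and normal); (3) conclude irreducibility of the product, either by the standard product-of-irreducibles statement over an algebraically closed field or by citing Proposition~\ref{fibres are of same dimension}; (4) conclude normality of the product by the permanence of normality under finite products over a perfect field, citing a reference such as \cite[\href{https://stacks.math.columbia.edu/tag/030N}{Tag 030N}]{Stk} or the relevant tag on products of normal schemes; (5) observe that normal implies reduced, finishing the proof.

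The only point requiring care --- and hence the ``main obstacle'', though it is a mild one --- is justifying the permanence of normality (and irreducibility) under the product. The subtlety is that for a general base field $L$ the product of two normal $L$-varieties need not be normal, and the product of two irreducible $L$-varieties need not be irreducible; both can fail over non-perfect or non-algebraically-closed fields. Here, however, $k$ is algebraically closed, so each $\mc Q_{d_i[c_i]}$ is geometrically normal and geometrically integral, and these geometric properties \emph{are} stable under arbitrary products over $k$. So the cleanest route is to note at step~(2) that ``integral and normal over an algebraically closed field'' upgrades to ``geometrically integral and geometrically normal'', and then apply the base-change/product stability of those geometric notions. With that observation in place the corollary follows with essentially no computation.
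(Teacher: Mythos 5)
Your proposal is correct and follows exactly the route the paper intends: the paper states Corollary~\ref{cor-fiber} with no proof, as an immediate consequence of the preceding lemma $\mc Q_D\cong\prod_i\mc Q_{d_i[c_i]}$ together with the earlier corollary that each $\mc Q_{d_i[c_i]}$ is reduced, irreducible and normal. Your additional care about the permanence of integrality and normality under products over an algebraically closed (hence perfect) field is exactly the point the paper leaves implicit, and it is handled correctly.
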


\section{Main Theorem}
\begin{definition}\label{cat-nf}
	Let $X$ be a connected, projective and reduced $k$-scheme. Let $\mathcal{C}^{\rm nf}(X)$ denote the full subcategory of 
	coherent sheaves whose objects are coherent sheaves $E$ on $X$ satisfying the following two conditions: 
	\begin{enumerate}
		\item $E$ is locally free, and 
		\item for any smooth projective curve $C$ over $k$ and any morphism $f : C \longrightarrow X$, 
		the vector bundle $f^*E$ is semistable of degree $0$. 
	\end{enumerate}
\end{definition}
We call the objects of the category $\mc C^{\rm nf}(X)$ {\it numerically flat vector bundles} on $X$. 
Fix a $k$-valued point $x \in X$. 
Let ${\rm Vect}_k$ be the category of finite dimensional $k$-vector spaces. 
Let $T_x : \mc C^{\rm nf}(X) \longrightarrow {\rm Vect}_k$ be the fiber functor defined by 
sending an object $E$ of $\mc C^{\rm nf}(X)$ to its fiber $E_x \in {\rm Vect}_k$ at $x$. 
Then $(\mc C^{\rm nf}(X), \otimes, T_x, \mathcal{O}_X)$ is a neutral Tannaka category 
\cite[Proposition 5.5, p.~2096]{La}. 
The affine $k$-group scheme $\pi^S(X, x)$ Tannaka dual to this category is called the 
{\it S-fundamental group scheme} of $X$ with base point $x$ \cite[Definition 6.1, p.~2097]{La}. 

A vector bundle $E$ is said to be {\it finite} if there are distinct non-zero polynomials 
$f, g \in \mathbb{Z}[t]$ with non-negative coefficients such that $f(E) \cong g(E)$. 

\begin{definition}
	A vector bundle $E$ on $X$ is said to be {\it essentially finite} if there exist two 
	numerically flat vector bundles $V_1, V_2$ and 
	finitely many finite vector bundles $F_1, \ldots, F_n$ on $X$ with 
	$V_2 \subseteq V_1 \subseteq \bigoplus\limits_{i=1}^n F_i$ such that $E \cong V_1/V_2$. 
\end{definition}

Let ${\rm EF}(X)$ be the full subcategory of coherent sheaves 
whose objects are essentially finite vector bundles on $X$. 
Fix a closed point $x \in X$ and let 
$T_x : {\rm EF}(X) \longrightarrow {\rm Vect}_k$
be the fiber functor defined by 
sending an object $E \in {\rm EF}(X)$ to its fiber $E_x$ at $x$. 
Then the quadruple $({\rm EF}(X), \bigotimes, T_x, \mathcal{O}_X)$ is a neutral Tannakian category. 
The affine $k$-group scheme $\pi^N(X, x)$ Tannaka dual to this category is referred to as 
the {\it Nori-fundamental group scheme} of $X$ with base point $x$, see 
\cite{No1} for more details. 

In \cite[Proposition 8.2]{La} it is proved that the $S$-fundamental group 
of projective space is trivial. In \cite{Mehta-Hogadi} it is proved 
that the $S$-fundamental group scheme is a birational invariant of
smooth projective varieties.

Let $S_{d_i,c_i}$ be the space defined in section \ref{S_d} by taking 
$d=d_i$ and $c=c_i$. In view of the discussion in section 
\ref{subsection-birational} there is a birational map 
$$\eta_D:=(\tilde \theta_D\circ \prod g_{d_i}):\prod S_{d_i,c_i}\to \prod \mc Q_{d_i[c_i]}\to \mc Q_D\,.$$

\begin{proposition}\label{prop-num-flat-trivial}
A numerically flat bundle on $\mc Q_D$ is trivial.
\end{proposition}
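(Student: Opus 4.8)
The plan is to transport the question along the birational morphism $\eta_D\colon\prod_i S_{d_i,c_i}\to\mc Q_D$ to the smooth projective variety $\prod_i S_{d_i,c_i}$, prove triviality of numerically flat bundles there, and then descend using that $\mc Q_D$ is normal. First I would record the elementary fact that the pullback of a numerically flat bundle along any morphism of connected reduced projective $k$-schemes is again numerically flat: if $W\in\mc C^{\rm nf}(Z)$ and $h\colon Y\to Z$ is a morphism, then $h^*W$ is locally free and, for every smooth projective curve $f\colon C\to Y$, the bundle $f^*h^*W=(h\circ f)^*W$ is semistable of degree $0$. Applying this to $\eta_D$ (which is an honest morphism, being $\tilde\theta_D\circ\prod_i g_{d_i}$), it suffices to prove (a) that every numerically flat bundle on $\prod_i S_{d_i,c_i}$ is trivial, and (b) that this triviality descends along $\eta_D$.

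For (a) I would argue as follows. By its construction in Section \ref{S_d}, each $S_{d_i,c_i}$ is an iterated projective bundle over $\mathrm{Spec}\,k$, hence a smooth projective rational variety; since a finite product of rational varieties is rational, $\prod_i S_{d_i,c_i}$ is smooth, projective and rational, in particular birational to $\mathbb{P}^N$ with $N=d(r-1)$. As $\pi^S(\mathbb{P}^N)$ is trivial \cite[Proposition 8.2]{La} and the $S$-fundamental group scheme is a birational invariant of smooth projective varieties \cite{Mehta-Hogadi}, the group scheme $\pi^S(\prod_i S_{d_i,c_i})$ is trivial, so by the Tannakian equivalence every object of $\mc C^{\rm nf}(\prod_i S_{d_i,c_i})$ is isomorphic to $\mc O^{\oplus m}$. (Alternatively one could prove (a) directly by induction along the projective-bundle tower using Grauert's theorem: a numerically flat bundle restricts to a trivial bundle on each fibre $\mathbb{P}^{r-1}$, so it is pulled back from the base, and its pushforward is again numerically flat because curves in the base lift to the total space.)

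For (b) I would use that $\eta_D$ is proper ($\prod_i S_{d_i,c_i}$ and $\mc Q_D$ are both projective) and birational (it is an isomorphism over a dense open subset of $\mc Q_D$, by Proposition \ref{birational} for the factors $g_{d_i}$ together with the isomorphism $\tilde\theta_D$), while $\mc Q_D$ is normal by Corollary \ref{cor-fiber}. Then Zariski's main theorem identifies the Stein factorisation of $\eta_D$ with a finite birational morphism onto the normal variety $\mc Q_D$, which must be an isomorphism; equivalently $(\eta_D)_*\mc O_{\prod_i S_{d_i,c_i}}=\mc O_{\mc Q_D}$. Finally, for $W\in\mc C^{\rm nf}(\mc Q_D)$ we have $\eta_D^*W\cong\mc O^{\oplus n}$ with $n={\rm rank}\,W$ by (a), and the projection formula gives
$$W\cong W\otimes(\eta_D)_*\mc O_{\prod_i S_{d_i,c_i}}\cong(\eta_D)_*\eta_D^*W\cong(\eta_D)_*\mc O^{\oplus n}\cong\mc O_{\mc Q_D}^{\oplus n},$$
so $W$ is trivial.

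The step I expect to be the main obstacle is (b): the identity $(\eta_D)_*\mc O=\mc O_{\mc Q_D}$ relies on $\mc Q_D$ being normal — which is exactly why Corollary \ref{cor-fiber} is proved beforehand — and on $\eta_D$ being an actual morphism rather than merely a rational map, which is guaranteed by Lemma \ref{fibre-l2} and the isomorphism $\tilde\theta_D$. The verification of properness and birationality of $\eta_D$, and of the lifting of curves in (a), is routine.
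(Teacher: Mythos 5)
Your proposal is correct and follows essentially the same route as the paper: pull back along the birational morphism $\eta_D$, use rationality of the iterated projective bundles together with the birational invariance of $\pi^S$ \cite{Mehta-Hogadi} and the triviality of $\pi^S(\mathbb{P}^N)$ \cite[Proposition 8.2]{La} to trivialize $\eta_D^*W$, and then descend via $(\eta_D)_*\mc O=\mc O_{\mc Q_D}$, which holds by Zariski's main theorem since $\mc Q_D$ is normal (Corollary \ref{cor-fiber}). The paper's proof is just a terser version of the same chain $W\cong\eta_{D*}\eta_D^*W\cong\eta_{D*}\mc O^{\oplus r}\cong\mc O_{\mc Q_D}^{\oplus r}$.
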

\begin{proof}
Let $W$ be a numerically flat bundle on $\mc Q_D$.
As $\mc Q_D$ is normal, $\eta_D$ is birational, $\prod S_{d_i,c_i}$ 
is a smooth rational variety, we have 
\begin{align*}
	W &\cong \eta_{D*}\eta_D^*W\\
		&\cong \eta_{D*}(\mc O)^{\oplus r}\\
		&\cong \mc O_{\mc Q_D}^{\oplus r}
\end{align*}
In the above we have used the result of \cite{Mehta-Hogadi}. This proves the proposition. 
\end{proof}

We now prove the main result of this article.

\begin{theorem}\label{m-t}
Let $k$ be an algebraically closed field. 
Let $C$ be an irreducible smooth projective curve over $k$. 
Let $E$ be a locally free sheaf on $C$ of rank $\geq 2$.
Fix an integer $d \geq 2$. Let $\mc Q$ denote the Quot scheme
parameterizing torsion quotients of $E$ of degree $d$. 
Let $S^dC$ denote the $d$th symmetric product of $C$ and 
let $\phi:\mc Q\to S^dC$ denote the Hilbert Chow map (see section \ref{section-Hilbert-Chow}). 
Then the induced map $\phi^S_*:\pi^S(\mc Q,q)\to \pi^S(S^dC,\phi(q))$ is an isomorphism.
\end{theorem}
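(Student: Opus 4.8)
The plan is to show that $\phi^S_*$ is an isomorphism of affine $k$-group schemes by exhibiting $\phi$ as a map with nice enough fibers that the Tannakian categories $\mc C^{\rm nf}(\mc Q)$ and $\mc C^{\rm nf}(S^dC)$ are equivalent via pullback. The standard criterion (see \cite{La}) is that $\phi^S_*$ is (a) faithfully flat, equivalently $\phi^S_*$ is a closed immersion, and (b) a closed immersion, equivalently surjective; in Tannakian terms, (a) amounts to: every numerically flat bundle on $S^dC$ pulled back to $\mc Q$ remains numerically flat and its subobjects are pulled back, while (b) amounts to: every numerically flat bundle on $\mc Q$ is the pullback of one on $S^dC$. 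Surjectivity of $\phi$ with connected fibers handles the "subobject" and faithfulness part automatically, since $\phi_*\mc O_{\mc Q} = \mc O_{S^dC}$ (which follows because $\phi$ is flat by the corollary above and all fibers $\mc Q_D$ are reduced and connected by Corollary \ref{cor-fiber}, so the fibers are geometrically connected and $\phi$ has the property that $\mc O_{S^dC}\to\phi_*\mc O_{\mc Q}$ is an isomorphism).

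The first main step is to prove that $\phi^S_*$ is a closed immersion, i.e. that $\phi^*$ of a numerically flat bundle on $S^dC$ stays numerically flat, and that every subobject in $\mc C^{\rm nf}(\mc Q)$ of such a pullback is again pulled back. Numerical flatness is preserved under pullback by any morphism of smooth projective varieties essentially by definition (compose with curves mapping into $\mc Q$), so the content is the statement about subobjects, which reduces to $\phi_*\mc O_{\mc Q}=\mc O_{S^dC}$ together with a projection-formula argument; here I would invoke flatness of $\phi$ and reducedness and connectedness of the fibers $\mc Q_D$ established in Corollary \ref{cor-fiber}.

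The second — and I expect harder — main step is essential surjectivity: every numerically flat bundle $W$ on $\mc Q$ descends along $\phi$. The strategy is Grauert-type descent: for this it suffices that $W$ restricted to every fiber $\mc Q_D$ is trivial, because then $\phi_*W$ is locally free of the right rank, the natural map $\phi^*\phi_*W\to W$ is an isomorphism (checked fiberwise), and $\phi_*W$ is itself numerically flat on $S^dC$ (any curve in $S^dC$ lifts, after base change, into $\mc Q$ since $\phi$ is flat and proper, and semistability of degree zero descends along the surjection with connected fibers). Triviality of $W\vert_{\mc Q_D}$ is exactly Proposition \ref{prop-num-flat-trivial}: $\mc Q_D$ is normal (Corollary \ref{cor-fiber}), it is birational via $\eta_D$ to the smooth rational variety $\prod S_{d_i,c_i}$, and a numerically flat bundle on a smooth rational projective variety is trivial by \cite{Mehta-Hogadi} (birational invariance of $\pi^S$ plus triviality on projective space, \cite[Proposition 8.2]{La}), so pushing the trivial bundle forward along the birational normal-target map $\eta_D$ recovers $W\vert_{\mc Q_D}$ as trivial.

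Finally I would assemble the two steps: step one gives that $\phi^S_*$ is faithfully flat as a morphism of group schemes... wait — more precisely, step one shows $\phi^S_*$ is a closed immersion and step two shows it is faithfully flat (surjective), and a homomorphism of affine $k$-group schemes that is both a closed immersion and faithfully flat is an isomorphism. The main obstacle is genuinely the fiberwise triviality input feeding descent: everything hinges on the geometry of $\mc Q_D$ worked out in the previous sections, in particular that $\mc Q_D$ is normal and dominated birationally by a smooth rational variety, so that \cite{Mehta-Hogadi} applies; once Proposition \ref{prop-num-flat-trivial} is in hand, the descent and the Tannakian bookkeeping are formal.
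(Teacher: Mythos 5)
Your proposal is correct and follows essentially the same route as the paper: faithful flatness of $\phi^S_*$ from $\phi_*\mc O_{\mc Q}=\mc O_{S^dC}$ (flatness of $\phi$ plus integral fibers, \cite[Lemma 8.1]{La}), and the other half from the fact that every numerically flat bundle on $\mc Q$ is pulled back along $\phi$, via fiberwise triviality (Proposition \ref{prop-num-flat-trivial}) and Grauert's theorem. The only slip is that you have the two Tannakian labels interchanged relative to \cite[Proposition 2.21]{DMOS}: the ``subobjects of pullbacks are pullbacks'' condition (equivalently $\phi_*\mc O_{\mc Q}=\mc O_{S^dC}$) characterizes faithful flatness, i.e.\ surjectivity, of $\phi^S_*$, while ``every numerically flat bundle on $\mc Q$ is a (subquotient of a) pullback'' is what makes $\phi^S_*$ a closed immersion --- but since you establish both statements and only use their conjunction, the argument is unaffected.
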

\begin{proof}
Since the fibers of $\phi$ are projective integral varieties, and $\phi$ is flat, 
it follows that $\phi_*(\mc O_{\mc Q_D})=\mc O_{S^dC}$. Now applying \cite[Lemma 8.1]{La}
we see that $\phi^S_*$ is faithfully flat. To prove $\phi^S_*$ is a closed immersion
we will use \cite[Proposition 2.21(b)]{DMOS}, which we recall for the convenience of the reader. 
For an affine algebraic group scheme $G$ over $k$, let ${\rm Rep}_k(G)$ 
denote the category of finite dimensional representations of $G$ on 
$k$-vector spaces.
Let $\theta : G \to G'$ 
be a homomorphism of affine group schemes over $k$ and let 
\begin{equation}\label{eqn-hom-f}
	\widetilde{\theta} : {\rm Rep}_k(G') \to {\rm Rep}_k(G) 
\end{equation}
be the functor given by sending $\rho' : G' \to {\rm GL}(V)$ to $\rho'\circ \theta : G \to {\rm GL}(V)$. 
An object $\rho : G \to {\rm GL}(V)$ in ${\rm Rep}_k(G)$ is said to be a {\it subquotient} of an object 
$\eta : G \to {\rm GL}(W)$ in ${\rm Rep}_k(G)$ if there are two $G$-submodules $V_1 \subset V_2$ of $W$ 
such that $V \cong V_2/V_1$ as $G$-modules. 	
Let $\theta : G \to G'$ be a homomorphism of 
affine algebraic groups over $k$. Then $\theta$ is a closed 
immersion if and only if every object of ${\rm Rep}_k(G)$ 
is isomorphic to a subquotient of an object of the form $\widetilde{\theta}(V')$, 
for some $V' \in {\rm Rep}_k(G')$. 

Let $W$ be a numerically flat bundle on $\mc Q$ which corresponds
to a finite dimensional representation $\rho:\pi^S(\mc Q,q)\to {\rm GL}(V)$.
We will show that there is a numerically flat bundle $W'$ on $S^d(C)$ 
such that $W\cong \phi^*W'$. This precisely means that there is a representation
$\rho':\pi^S(S^d(C),\phi(q))\to {\rm GL}(V)$ such that $\rho=\rho'\circ \phi_*^S$.
Now by the criterion in the preceding para we see that $\phi_*^S$
is a closed immersion. 

By Grauert's theorem 
\cite[Corollary 12.9]{Ha} and Proposition \ref{prop-num-flat-trivial}, it follows that 
if $W$ is a numerically flat bundle on $\mc Q$ then $\phi_*(W)$ is a locally 
free sheaf on $S^dC$ and the natural map $\phi^*\phi_*(W)\to W$ is an 
isomorphism. It follows easily that $\phi_*(W)$ is numerically flat. 
This is easily checked because given a morphism $f:X\to Y$ between
two projective varieties, and a morphism from a projective curve 
$C\to Y$, we can always find a cover $C'\to C$ such that the composite 
$C'\to Y$ lifts to a map $C'\to X$. 
This proves that $\phi^S_*$ is a closed immersion.
\end{proof}

From the $S$-fundamental group scheme we recover the Nori fundamental group scheme
as the inverse limit of finite quotients. Similarly, the etale 
fundamental group scheme can be recovered as the inverse limit of finite and 
reduced quotients. Thus, we get the following corollary. (See \S 5.5 in \cite{PS-surface}
for more details.)

\begin{corollary}\label{m-c}
The induced map $\phi^N_*:\pi^N(\mc Q,q)\to \pi^N(S^dC,\phi(q))$ is an isomorphism.
The induced map $\phi^{\et}_*:\pi^{\et}(\mc Q,q)\to \pi^{\et}(S^dC,\phi(q))$ is an isomorphism.
\end{corollary}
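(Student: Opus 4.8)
The plan is to deduce the corollary formally from Theorem~\ref{m-t}, using the functorial passage from the $S$-fundamental group scheme to the Nori and \'etale fundamental group schemes that is recalled just above the statement. For a connected, projective, reduced $k$-scheme $X$ with base point $x$ there are canonical faithfully flat homomorphisms $\pi^S(X,x)\twoheadrightarrow\pi^N(X,x)\twoheadrightarrow\pi^{\et}(X,x)$, where $\pi^N(X,x)$ is the inverse limit of the finite quotient group schemes of $\pi^S(X,x)$ --- equivalently, the essentially finite bundles are exactly the numerically flat bundles whose associated representation of $\pi^S(X,x)$ factors through a finite quotient --- and $\pi^{\et}(X,x)$ is the inverse limit of those finite quotients which are in addition reduced, i.e.\ \'etale since $k$ is algebraically closed. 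These identifications are compatible with pullback along morphisms of pointed $k$-schemes; see \cite[\S5.5]{PS-surface} and \cite{No1}. Applying $\pi^{?}(-,-)$ for $?=S,N,\et$ to $\phi\colon(\mc Q,q)\to(S^dC,\phi(q))$ therefore produces two commutative squares whose vertical arrows are the canonical projections above and whose horizontal arrows are $\phi^S_*$, $\phi^N_*$ and $\phi^{\et}_*$.

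Granting this, the proof is short. By Theorem~\ref{m-t} the map $\phi^S_*$ is an isomorphism. An isomorphism $\theta\colon G\xrightarrow{\sim}G'$ of affine $k$-group schemes carries a normal closed subgroup scheme $H\subseteq G$ with $G/H$ finite to $\theta(H)\subseteq G'$ with $G'/\theta(H)\cong G/H$, and it preserves reducedness of such quotients; hence $\theta$ induces an isomorphism of the cofiltered systems of finite (resp.\ finite reduced) quotients, and so an isomorphism on their inverse limits. Taking $\theta=\phi^S_*$ and invoking the description of $\pi^N$ and $\pi^{\et}$ above, we conclude that $\phi^N_*$ and $\phi^{\et}_*$ are isomorphisms. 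Equivalently, one may argue Tannakially: $\phi^S_*$ being an isomorphism means $\phi^*\colon\mc C^{\rm nf}(S^dC)\to\mc C^{\rm nf}(\mc Q)$ is an equivalence of neutral Tannakian categories, and since the conditions cutting out ${\rm EF}(X)$ inside $\mc C^{\rm nf}(X)$ --- finiteness of a bundle, and being a subquotient of a direct sum of finite bundles --- are intrinsic to the tensor-category structure, $\phi^*$ restricts to an equivalence ${\rm EF}(S^dC)\xrightarrow{\sim}{\rm EF}(\mc Q)$, giving the statement for $\pi^N$, and the reduced finite quotients correspond in the same way, giving it for $\pi^{\et}$.

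The one point that needs care --- and the modest main obstacle --- is the compatibility of base points throughout: one must check that the fiber functor at $q$ on $\mc Q$ is carried by $\phi^*$ to the fiber functor at $\phi(q)$ on $S^dC$, which is immediate from the canonical identification $(\phi^*E)_q\cong E_{\phi(q)}$, and that the three group schemes attached to a pointed scheme fit into the functorial tower described above. Both facts are standard; once they are in place no further input is required, and we cite \cite[\S5.5]{PS-surface} for the details.
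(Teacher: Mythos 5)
Your proposal is correct and takes essentially the same route as the paper: the authors likewise deduce the corollary from Theorem \ref{m-t} by recovering $\pi^N$ as the inverse limit of finite quotients of $\pi^S$ and $\pi^{\et}$ as the inverse limit of finite reduced quotients, citing \cite[\S 5.5]{PS-surface} for the details. Your elaboration of why an isomorphism on $\pi^S$ induces isomorphisms on these inverse limits (and the alternative Tannakian phrasing) fills in exactly the steps the paper leaves to the reference.
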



\begin{thebibliography}{DMOS82}
	\expandafter\ifx\csname url\endcsname\relax
	\def\url#1{\texttt{#1}}\fi
	\expandafter\ifx\csname doi\endcsname\relax
	\def\doi#1{\burlalt{doi:#1}{http://dx.doi.org/#1}}\fi
	\expandafter\ifx\csname urlprefix\endcsname\relax\def\urlprefix{URL }\fi
	\expandafter\ifx\csname href\endcsname\relax
	\def\href#1#2{#2}\fi
	\expandafter\ifx\csname burlalt\endcsname\relax
	\def\burlalt#1#2{\href{#2}{#1}}\fi
	
	\bibitem[BDH15]{Bis-Dh-Hu}
	Indranil Biswas, Ajneet Dhillon, and Jacques Hurtubise.
	\newblock Brauer groups of {Q}uot schemes.
	\newblock {\em Michigan Math. J.}, 64(3):493--508, 2015.
	\newblock \doi{10.1307/mmj/1441116655}.
	
	\bibitem[BH15]{BH}
	Indranil Biswas and Amit Hogadi.
	\newblock On the fundamental group of a variety with quotient singularities.
	\newblock {\em Int. Math. Res. Not. IMRN}, (5):1421--1444, 2015.
	\newblock \doi{10.1093/imrn/rnt261}.
	
	\bibitem[BKR19]{Kleiman-et-al}
	Cristina Bertone, Steven~L. Kleiman, and Margherita Roggero.
	\newblock On the quot scheme ${\rm
		Quot}^d_{\mathcal{O}^r_{\mathbb{P}^1}/\mathbb{P}^1/k}$, 2019,
	\burlalt{arXiv:1906.01953}{http://arxiv.org/abs/arXiv:1906.01953}.
	
	\bibitem[BPS06]{BPS}
	Indranil Biswas, A.~J. Parameswaran, and S.~Subramanian.
	\newblock Monodromy group for a strongly semistable principal bundle over a
	curve.
	\newblock {\em Duke Math. J.}, 132(1):1--48, 2006.
	\newblock \doi{10.1215/S0012-7094-06-13211-8}.
	
	\bibitem[DMOS82]{DMOS}
	Pierre Deligne, James~S. Milne, Arthur Ogus, and Kuang-yen Shih.
	\newblock {\em Hodge cycles, motives, and {S}himura varieties}, chapter~2,
	pages 101--228.
	\newblock Springer-Verlag, Berlin-New York, 1982.
	\newblock \doi{10.1007/978-3-540-38955-2\_4}.
	
	\bibitem[FGA05]{FGAex}
	Barbara Fantechi, Lothar G\"{o}ttsche, Luc Illusie, Steven~L. Kleiman, Nitin
	Nitsure, and Angelo Vistoli.
	\newblock {\em Fundamental algebraic geometry}, volume 123 of {\em Mathematical
		Surveys and Monographs}.
	\newblock American Mathematical Society, Providence, RI, 2005.
	\newblock \doi{10.1090/surv/123}.
	\newblock Grothendieck's FGA explained.
	
	\bibitem[Har77]{Ha}
	Robin Hartshorne.
	\newblock {\em Algebraic geometry}.
	\newblock Springer-Verlag, New York-Heidelberg, 1977.
	\newblock Graduate Texts in Mathematics, No. 52.
	
	\bibitem[HM11]{Mehta-Hogadi}
	Amit Hogadi and Vikram Mehta.
	\newblock Birational invariance of the {$S$}-fundamental group scheme.
	\newblock {\em Pure Appl. Math. Q.}, 7(4, Special Issue: In memory of Eckart
	Viehweg):1361--1369, 2011.
	
	\bibitem[Lan11]{La}
	Adrian Langer.
	\newblock On the {S}-fundamental group scheme.
	\newblock {\em Ann. Inst. Fourier (Grenoble)}, 61(5):2077--2119 (2012), 2011.
	\newblock \doi{10.5802/aif.2667}.
	
	\bibitem[Lan12]{La2}
	Adrian Langer.
	\newblock On the {S}-fundamental group scheme. {II}.
	\newblock {\em J. Inst. Math. Jussieu}, 11(4):835--854, 2012.
	\newblock \doi{10.1017/S1474748012000011}.
	
	\bibitem[Mat86]{Mat}
	Hideyuki Matsumura.
	\newblock {\em Commutative ring theory}, volume~8 of {\em Cambridge Studies in
		Advanced Mathematics}.
	\newblock Cambridge University Press, Cambridge, 1986.
	\newblock Translated from the Japanese by M. Reid.
	
	\bibitem[Nor76]{No1}
	Madhav~V. Nori.
	\newblock On the representations of the fundamental group.
	\newblock {\em Compositio Math.}, 33(1):29--41, 1976.
	\newblock \urlprefix\url{http://www.numdam.org/item?id=CM_1976__33_1_29_0}.
	
	\bibitem[Nor82]{No2}
	Madhav~V. Nori.
	\newblock The fundamental group-scheme.
	\newblock {\em Proc. Indian Acad. Sci. Math. Sci.}, 91(2):73--122, 1982.
	\newblock \doi{10.1007/BF02967978}.
	
	\bibitem[PS19a]{PS-curve}
	Arjun Paul and Ronnie Sebastian.
	\newblock Fundamental group schemes of $n$-fold symmetric product of a smooth
	projective curve, 2019,
	\burlalt{arXiv:1907.09388}{http://arxiv.org/abs/arXiv:1907.09388}.
	
	\bibitem[PS19b]{PS-surface}
	Arjun Paul and Ronnie Sebastian.
	\newblock Fundamental group schemes of \uppercase{H}ilbert scheme of $n$ points
	on a smooth projective surface, 2019,
	\burlalt{arXiv:1907.04290}{http://arxiv.org/abs/arXiv:1907.04290}.
	
	\bibitem[Stk]{Stk}
	{The Stack Project}.
	\newblock \url{https://stacks.math.columbia.edu}.
	
\end{thebibliography}


\newcommand{\etalchar}[1]{$^{#1}$}

\end{document}